\documentclass[journal]{IEEEtran}
\usepackage{amssymb,epsfig,xcolor,cite,amsmath,amsfonts,mathrsfs,algorithm}
\usepackage{epstopdf}
\usepackage{datetime,fancyhdr}
\usepackage{algpseudocode}
\usepackage{arydshln}
\usepackage{multirow}
\usepackage{amsthm} 
\allowdisplaybreaks
\usepackage{times} % assumes new font selection scheme installed
\usepackage{epsfig}
\usepackage{graphicx}
\usepackage{latexsym}
\usepackage{subfigure}
\usepackage{threeparttable}

\newtheorem{lemma}{Lemma}[section]
\newtheorem{theorem}{Theorem}[section]
\newtheorem{corollary}{Corollary}[section]
\newtheorem{remark}{Remark}[section]
\newtheorem{definition}{Definition}[section]

\newtheorem{proposition}{Proposition}[section]
\newtheorem{assumption}{Assumption}[section]
\definecolor{purple}{RGB}{128,0,128} 
\ifCLASSINFOpdf
\else
\fi

\begin{document}

\title{Predictive Online Disturbance-Action Control for Linear Dynamical Systems}

\author{Xia Jiang, Jianping Li, Lihua Xie,~\IEEEmembership{Fellow,~IEEE}
% <-this % stops a space
\thanks{This paper was not presented at any conference. Corresponding author is Lihua Xie.}
\thanks{X. Jiang (xia.jiang@ntu.edu.sg) and J. Li (jianping.li@ntu.edu.sg) are with the School of Electrical and Electronic Engineering, Nanyang Technological University, Singapore 639798.}
\thanks{L. Xie (elhxie@ntu.edu.sg) is with NTU–VinUni Joint Research Laboratory for Embodied AI and Robotics, School of Electrical and Electronic Engineering, Nanyang Technological University, Singapore 639798 and VinUniversity, Hanoi, Vietnam.}
}

% make the title area
\maketitle

% As a general rule, do not put math, special symbols or citations
% in the abstract or keywords.
{
\begin{abstract}
    This paper studies predictive online control for linear dynamical systems with time-varying cost functions, motivated by applications such as real-time adaptive control of motorized LiDAR sensing systems, where the controller must adjust the sensing direction online to balance localization accuracy, scanning efficiency, and smooth actuation under changing environmental conditions. We propose a predictive online control (POC) algorithm that leverages short-term predictions of future cost functions while accounting for the memory effect induced by system dynamics. Using the disturbance-action controller (DAC) parameterization, the online control problem is transformed into an OCO-with-memory formulation over policy parameters. We develop a windowed receding-horizon update that incorporates short-term predictions and accommodates a total-variation regularizer to suppress abrupt policy variations. Theoretically, we prove that POC achieves a dynamic policy regret bound scaling with the path length of the comparator sequence, and provide sufficient conditions in terms of logarithmic prediction and memory horizons for the regret guarantee. The proposed method is evaluated on a motorized LiDAR sensing task, demonstrating improved localization accuracy together with a favorable trade-off between sensing accuracy and scanning completeness.
\end{abstract}}
% \begin{abstract}
%  This paper studies the online control of linear dynamical systems with adversarial cost functions, which is a fundamental problem in online learning and control theory. We propose a novel predictive online control (POC) algorithm that leverages short-term predictions of future cost functions to optimize the dynamic policy regret of online control. The online control problem is transformed into online optimization with memory under the disturbance-action controller (DAC) parameterization, which captures the memory effect of the control system. By incorporating a carefully designed receding horizon objective that balances stage costs with switching costs that penalize abrupt changes, the proposed algorithm utilizes short-term predictions to obtain superior regret performance. Our analysis proves that the proposed POC algorithm achieves a dynamic policy regret bound that scales with the path-length of the comparator sequence, providing theoretical guarantees for its performance in non-stationary environments. We further validate our theoretical results through simulations on motorized LiDAR sensing, showcasing the practical effectiveness of our approach in real-world applications.
 
%  \end{abstract}

% Note that keywords are not normally used for peerreview papers.
\begin{IEEEkeywords}
online optimization with memory, predictive control, disturbance-action controller, dynamic policy regret
\end{IEEEkeywords}

% For peer review papers, you can put extra information on the cover
% page as needed:
% \ifCLASSOPTIONpeerreview
% \begin{center} \bfseries EDICS Category: 3-BBND \end{center}
% \fi
%
% For peerreview papers, this IEEEtran command inserts a page break and
% creates the second title. It will be ignored for other modes.
\IEEEpeerreviewmaketitle

\section{Introduction}
% The very first letter is a 2 line initial drop letter followed
% by the rest of the first word in caps.
%
% form to use if the first word consists of a single letter:
% \IEEEPARstart{A}{demo} file is ....
%
% form to use if you need the single drop letter followed by
% normal text (unknown if ever used by the IEEE):
% \IEEEPARstart{A}{}demo file is ....
%
% Some journals put the first two words in caps:
% \IEEEPARstart{T}{his demo} file is ....
%
% Here we have the typical use of a "T" for an initial drop letter
% and "HIS" in caps to complete the first word.

%online optimization/online control  and its applications

\par Modern robotic applications, such as path planning \cite{path_planning_us} and perception navigation \cite{PAMPC_perce_awa}, inherently operate in dynamic, highly non-stationary environments. This motivates the growing interest in online control, which integrates online learning with control theory to enable real-time adaptation. Conventional control paradigms have notable limitations in such settings. Traditional optimal control methods (e.g., LQG \cite{Bertsekas2012DynamicProgramming}) typically assume known, static cost functions and specific noise statistics, whereas robust control methods (e.g. $H_\infty$ \cite{Lee2020AdaptiveRobust}) often produces overly conservative policies by optimizing against worst-case scenarios. In constrast, online control \cite{Hazan2022OnlineControl} provides a more flexible framework that allows for time-varying, sequentially revealed, and potentially adversarial cost functions, where the controller must adapt in real-time without prior knowledge of future objectives. Moreover, online control shifts the objective from minimizing an absolute expected cost to minimizing the performance gap with respect to the best fixed policy in hindsight. This regret-based formulation leverages online convex optimization (OCO) to handle exogenous inputs and evolving constraints \cite{Hazan2022OnlineControl}. As a result, it is particularly suitable for real-world systems, where cost functions vary with environmental conditions. %The online control framework offers a theoretically rigorous foundation that complements modern adaptive control and reinforcement learning algorithms, enabling them to effectively operate in dynamic and uncertain environments.

%online optimization with memory, policy regret
\par Online control of linear dynamical systems has been extensively studied in the literature \cite{Agarwal2018OnlineControl,agarwal2019logarithmic}, 
yielding various algorithms with regret guarantees under different settings. Because cost functions are revealed online and may be adversarial, the optimal policy is generally unavailable a priori, and performance is therefore commonly evaluated via policy regret against the best fixed policy in hindsight. 
The seminal work \cite{Agarwal2018OnlineControl} established an $\mathcal{O}(\sqrt{T})$ policy regret bound for linear dynamical systems by reducing the online control problem to online convex optimization with memory. Subsequent studies refined these guarantees to achieve logarithmic regret for strongly convex losses \cite{Simchowitz2020, agarwal2019logarithmic, Foster2020} and extended the framework to accommodate action constraints \cite{Li2021, Liu2023}. Although these methods provide tight performance boundaries under strongly convex objectives, they are confined to static regret metrics and fail to exploit lookahead predictions. Related extensions have also considered more general settings, such as unknown system dynamics \cite{Hazan2020}, bandit feedback \cite{Gradu2020}, and competitive control ratios \cite{Shi2020, Goel2023CompetitiveControl}. %Most of these results focus on fully observed systems, while \cite{Simchowitz2020Improper} provides insight into partially observed settings. There are also some other related works studying online control from the perspectives of competitive ratio \cite{Shi2020,Goel2023CompetitiveControl} and adaptive regret \cite{Hazan2009EfficientLearning,Daniely2015}.

%reduction from online non-stochastic control to online convex optimization with memory, which allows for the application of powerful online learning techniques to control problems. Specifically, the reduction leverages a class of policies called disturbance-action controllers (DACs) to represent control actions as linear functions of past disturbances. This formulation captures the inherent memory in control problems, where the current state and action depend on a sequence of past decisions and disturbances. In addition, the framework reduces the unbounded memory problem to a tractable online optimization formulation with fixed memory windows.  

%By defining a truncated loss function that considers only a fixed window of past policies, the reduction effectively transforms the online control problem into an online optimization problem with memory []. This allows for the design of algorithms that can achieve low policy regret by optimizing over the truncated loss, while ensuring that the approximation error introduced by truncation is controlled. The reduction to OCO with memory provides a powerful framework for analyzing and designing online control algorithms that can adapt to non-stationary environments. 

%linear dynamical systems works, onine nonstochastic control (但是本文是stochastic noise设定)
\par Most existing studies focus on static policy regret, which only measures performance against a fixed comparator and is not suitable for learning in non-stationary and open environments \cite{Sugiyama2012}. In contrast, dynamic policy regret benchmarks the algorithm against a sequence of time-varying policies and better captures the inherent difficulty of online control, where the optimal policy evolves in response to changing cost functions. Pioneering work \cite{Zinkevich2003}  established dynamic regret analysis for online convex optimization, showing that online gradient descent attains an $\mathcal{O}(\sqrt{T}(1+P_T))$ dynamic regret, where $P_T$ denotes the path-length of the comparator sequence. More recently, the work \cite{Zhao2023NonStationaryOnlineLearning} developed a meta-base decomposition algorithm for non-stationary online learning with memory and applied it to online control, achieving an optimal dynamic policy regret bound. Despite these advances, the above algorithms do not explicitly exploit lookahead predictions of future cost functions, which may be useful for improving performance in non-stationary environments.

%online optimization with prediction (optimistic and short-term prediction, MPC)
\par To incorporate prediction capabilities into online control, we consider online optimization with prediction, where the decision maker receives short-term forecasts of future cost functions. This setting is motivated by the observation that in many real-world applications, near-term predictions are often more reliable than long-term forecasts. Recent application-oriented predictive control studies have shown strong practical potential in real-time engineering systems, including data-driven predictive control for microgrid energy management \cite{data_predic_vol}. These works demonstrate the
benefits of combining predictions with online adaptation, motivating
the need for predictive online control methods with explicit
performance guarantees. In the
online optimization literature, two representative approaches have been
studied, including optimistic online learning \cite{pmlr-v139-flaspohler21a,GrettonAccessible2016,pmlr-v30-Rakhlin13} and receding horizon control \cite{data_driven_motion,Li2021PredictOptimization,Rawlings2009}. Optimistic online learning algorithms exploit gradient predictions to improve regret bounds, while receding horizon control, including model predictive control (MPC), computes control actions by optimizing over a finite lookahead window of predicted cost functions. However, MPC approaches generally require to solve multi-stage optimization problems at each time step, which can be computationally prohibitive for real-time control applications \cite{Li2021PredictOptimization}. Despite the efforts to reduce computational burden \cite{preriod_event_mpc,Graichen2010StabilityIncremental,Alessio2009,Zeilinger2011,Paternain2018,Diehl2005}, these MPC algorithms lack regret guarantees for online problems with time-varying costs and do not directly apply to online control, where cost functions are arbitrary and the current cost depends on past decisions through state transitions.
%%online control with prediction
{
\par More recently, the incorporation of predictive information has shown considerable potential to improve sequential decision-making. For example, the work \cite{yan2024incorporation} effectively incorporates agents' likely future actions into pseudo-gradient dynamics for noncooperative games, enabling proactive decision-making and showing that predictive information can help characterize equilibrium stability under suitable conditions. In the online control setting, \cite{Mhaisen2024OptimisticOnline} incorporates gradient predictions into an optimistic Follow-the-Regularized-Leader (FTRL) framework, while prediction-aided and structured online control methods \cite{NEURIPS2020_155fa095,NEURIPS2019_6d1e481b,zhang2021regretanalysisonlinelqr,learning_regret_onlinequa} exploit finite lookahead windows or specific structural properties to improve sequential decision-making. However, predictive online control with non-stationary costs and time-varying policy benchmarks remains challenging.
The optimistic online control framework in \cite{Mhaisen2024OptimisticOnline} establishes guarantees only for static policy regret and does not explicitly account for policy variations. 
Existing methods \cite{NEURIPS2020_155fa095,NEURIPS2019_6d1e481b,zhang2021regretanalysisonlinelqr,learning_regret_onlinequa} typically rely on smooth finite-horizon objectives, smooth switching penalties, or specific LQR/quadratic formulations. They do not directly address the DAC-based OCO-with-memory formulation considered here, where the current loss depends on a truncated history of policy parameters induced by past disturbances and state transitions.}
%In our general setting, the predictive objective is designed specifically for dynamic policy regret minimization while explicitly regulating policy variation. This leads to a non-smooth predictive optimization problem that cannot be directly handled by existing smooth predictive frameworks, thereby requiring a different algorithmic treatment and theoretical analysis.
% since dynamic policy regret minimization requires explicit regulation of policy variation, resulting in a non-smooth optimization problem that calls for different algorithmic treatment and theoretical analysis.
% Second, although recent prediction-aided methods have been applied to linear dynamical systems, they rely on smooth finite-horizon objectives. Extending these smooth frameworks to general non-stationary online control is nontrivial. To achieve strict dynamic policy regret guarantees, it is crucial to explicitly penalize abrupt policy changes. This naturally introduces a non-smooth total-variation regularization, which, when coupled with system dynamics, renders standard smooth predictive methods inadequate and makes our objective formulation and algorithm design fundamentally different.

\par 
The main contributions of this work are summarized as follows.
\begin{itemize}
    \item We propose a predictive online control (POC) algorithm for linear dynamical systems with short-term cost predictions, motivated by applications such as real-time adaptive control of motorized LiDAR sensing systems.
    Using the DAC parameterization, we reformulate the control problem as OCO with memory, capturing the history-dependent effect of past disturbances and state transitions on current costs. The designed windowed receding-horizon update combines an online gradient descent (OGD) warm start with joint constrained proximal-gradient refinement, exploiting lookahead information while accommodating the non-smooth total-variation regularizer on successive DAC policy parameters. This regularizer promotes temporally consistent policy updates and smoother closed-loop behavior.
	% \item This paper proposes a novel Predictive Online Control (POC) algorithm that integrates short-term predictions into non-stationary online control. While seminal works (e.g., [27], [34]) have explored predictions in standard Online Convex Optimization (OCO), extending this to dynamical systems is highly non-trivial due to the unbounded memory effect of state transitions. By leveraging the Disturbance-Action Controller (DAC) parameterization, we rigorously reduce the online control problem to an OCO with memory and introduce a receding-horizon objective to exploit predictions effectively.
	
    % \item We develop a predictive online control (POC) algorithm that combines an OGD warm start with joint constrained proximal-gradient refinement over a finite lookahead window. Unlike receding-horizon gradient methods for smoothed OCO \cite{Li2021PredictOptimization,li_pre_nips_smooth}, which typically rely on smooth squared switching penalties, POC handles the non-smooth total-variation penalty. This penalty directly suppresses abrupt changes in the DAC parameters and better reflects the switching behavior of online control policies, but it also introduces coupled non-smooth window subproblems that require a proximal treatment.
    \item We establish a dynamic policy regret guarantee for the proposed POC algorithm by connecting the finite-window receding-horizon optimization with a global regularized objective. Our analysis explicitly characterizes the coupled effects of prediction horizon and DAC memory truncation on the regret performance. Under standard assumptions, choosing both the memory length $H$ and the prediction horizon $W$ on the order of $\mathcal{O}(\log T)$ effectively controls the memory-truncation and finite-window optimization errors, leading to a path-length-dependent dynamic policy regret guarantee.  Compared with existing predictive online control methods \cite{Mhaisen2024OptimisticOnline,NEURIPS2020_155fa095,NEURIPS2019_6d1e481b,zhang2021regretanalysisonlinelqr}, our analysis provides a regret characterization that simultaneously accounts for lookahead information and the memory-dependent policy dynamics.

	%\item We provide a rigorous theoretical analysis showing that the proposed POC algorithm achieves a dynamic policy regret bound of $\mathcal{O}(P_T)$ with a prediction horizon of $\mathcal{O}(\log T)$, matching the optimal regret order for strongly convex cost functions. This improves upon existing approaches, such as optimistic FTRL \cite{Mhaisen2024OptimisticOnline}, that only guarantee static regret. Our analysis integrates the convergence properties of constrained proximal gradient methods with classical results from online gradient descent, offering new  insights into exploiting predictive information for performance improvement.
    {
	\item We validate the proposed POC algorithm on a motorized LiDAR sensing (MLiS) task using simulation environments based on the MCD dataset. The empirical results demonstrate that POC improves localization accuracy while achieving a favorable accuracy-scanning trade-off and lower computation time than MPC-based baselines. The total-variation regularization is designed to promote smoother motor-speed transitions, supporting the suitability of POC for real-time sensing-control applications.} 
% the practical effectiveness of the proposed POC algorithm through extensive simulations on a motorized LiDAR sensing (MLiS) system. The empirical results show that POC achieves superior localization accuracy and scanning efficiency compared to baseline methods, while its total-variation regularization yields smoother control updates. These results demonstrate the practical benefits of incorporating short-term predictions into dynamic online control.
	%\item We demonstrate the practical effectiveness of the proposed method through extensive simulations on a motorized LiDAR sensing (MLiS) system. Unlike conventional online control approaches that ignore predictive information and focus mainly on static regret, POC explicitly leverages short-term predictions to optimize dynamic policy regret while characterizing their impact, making it well-suited for non-stationary environments with evolving cost functions and disturbances. The empirical results show that POC achieves superior localization accuracy and scanning efficiency compared to baseline methods, validating the practical benefits of our theoretical framework in real-world control applications.
\end{itemize}

The remainder of this paper is organized as follows. Section \ref{solver_design} describes the online control problem formulation and the design of the predictive online control algorithm, which also intoduces some preliminary concepts and the reduction from online control to OCO with memory. Section \ref{analysis_sec} provides the policy regret analysis for the proposed algorithm. Section \ref{sim_sec} demonstrates the performance of the proposed algorithm through motorized LiDAR sensing simulations. Finally, Section \ref{conclu_sec} concludes the paper and discusses future research directions. 

\subsection{Mathematical notations}
We denote $\mathbb{R}$ as the set of real numbers, $\mathbb{R}^n$ as the set of $n$-dimensional real column vectors, $\mathbb{R}^{n\times m}$ as the set of $n$-by-$m$ real matrices. For the control system, we reserve the letters $x,y$ for states and $u,v$ for actions. We denote $d_x$ and $d_u$ as the dimension of state and control action, respectively. Let $d=\max\{d_x,d_u\}$. The notation $M_{i:j}$ denote a sequence of variables $M_i, M_{i+1}, \cdots, M_j$. 
 The notation $\langle\cdot,\cdot \rangle$ denotes the inner product.   For a differentiable function $f(x)$, $\nabla f(x)$ denotes the gradient of $f$ with respect to $x$. 
\section{Problem description and algorithm design}\label{solver_design}
\par We study the online control framework with a discrete-time dynamical system, where at each time $t$, the controller observes the system state $x_t\in \mathbb{R}^{d_x}$ and decides an action $u_t \in \mathbb{R}^{d_u}$, which then induces a stage cost $c_t(x_t,u_t)$, and causes a transition to a new state $x_{t+1}$. In the online setting, the cost and the new state are revealed to the controller after it commits its action. In this paper, we study the online control of the linear dynamical system governed by
\begin{align}\label{linear_dyn_sys}
x_{t+1}=Ax_t+Bu_t+w_t,
\end{align}
where $A\in \mathbb{R}^{d_x\times d_x}$, $B\in \mathbb{R}^{d_x\times d_u}$ and the disturbance vector $w_t\in \mathbb{R}^{d_x}$. %In online nonstochastic control [], the disturbance can be generated arbitrarily and no statistical assumption is imposed on its distribution; additionally, cost functions can be chosen adversarially. The adversarial disturbance and online cost functions hinder the priori computation of the optimal policy in classical control theory [] and requires online learning techniques to handle adversarial environments.
% \par To define the performance metric, we denote the cost for a control algorithm $\mathcal{A}$ as 
% $$J_T(\mathcal{A})=\sum_{t=0}^T c_t(x_t,u_t).$$ 
% \par The standard measure for online control is the \textit{static policy regret}, defined as 
% \begin{align*}
% R_T=\sum_{t=1}^T c_t(x_t,u_t)-\min_{\pi\in \Pi}\sum_{t=1}^T c_t(x_t^{\pi},u_t^{\pi}).
% \end{align*}
% In static policy regret, the comparator $\pi$ could be chosen with complete knowledge of the disturbance and loss functions. However, since the unknown disturances and cost functions can change arbitrarily, the optimal controller of each round would also change accordingly. 
% Hence, it is necessary to adopt the \textit{dynamic policy regret} to benchmark the algorithm with a sequence of time-varying controllers $\pi_1,\cdots,\pi_T \in \Pi$, i.e.,
% \begin{align}
% DR_T=\sum_{t=1}^T c_t(x_t,u_t)-\sum_{t=1}^T c_t(x_t^{\pi_t},u_t^{\pi_t}).
% \end{align}
{
\par The standard measure for online control is evaluated via the expected dynamic policy regret, which benchmarks the algorithm against a sequence of time-varying controllers $\pi_1, \dots, \pi_T \in \Pi$, i.e.,
\begin{equation}
\label{eq:dr}
DR_T = \mathbb{E}\left[ \sum_{t=1}^T c_t(x_t, u_t) - \sum_{t=1}^T c_t(x_t^{\pi_t}, u_t^{\pi_t}) \right],
\end{equation}
where the expectation is taken with respect to the stochastic disturbances.
}
In addition, the benchmark set $\Pi$ is chosen as the class of disturbance-action controllers, which encompasses many controllers of interest.
%{\color{blue}The regret has been used in the literature [].}
{
\begin{remark}
Our dynamic regret is benchmarked against the best sequence of DAC policies. This is theoretically equivalent to comparing against the broad class of strongly stable linear policies, since any such controller can be approximated arbitrarily well by a DAC policy with a proper memory length \cite{Agarwal2018OnlineControl}. %This approximation gap is formally quantified as the truncation error in Lemma \ref{trunc_approx_err}.
Furthermore, unlike memoryless online optimization with instantaneous action minimizers, our policy-based benchmark respects the closed-loop state transitions and captures the effect of
current actions on future states.
\end{remark}
}
\subsection{Reduction to OCO with memory}\label{red_oco_sec}
This section describes the evolution of \eqref{linear_dyn_sys} under a non-stationary pollicy sequence of length $T$. Following the pioneering work \cite{Agarwal2018OnlineControl}, we adopt the disturbance-action controller (DAC) policy class, which parametrizes the executed action as a linear function of past disturbances.  This formulation enables a reduction of the online control problem to OCO with memory.

\begin{definition}
	(Disturbance-Action Controller, DAC). A disturbance-action controller $\pi(K, M)$ with a memory length $H>1$ is specified by a fixed matrix $K$ and parameters $M=\left(M^{[0]}, \ldots, M^{[H-1]}\right)$. At each time $t$, the policy $\pi(K, M)$ chooses the action $u_t$ at a state $x_t$, defined as $u_t=-K x_t+\sum_{i=1}^H M^{[i-1]} w_{t-i}$.
\end{definition}
\par For convenience, we define $w_i=0$ for $i<0$.  Since the system matrices $A$ and $B$ are assumed to be known, the disturbance can be exactly recovered as $w_t=x_{t+1}-A x_t-B u_t$, which makes the DAC policy implementable. We denote the policy applied at time $t$ by $M_t=\{M_t^{[i]}\}$, where the subscript $t$ indicates the time index and the superscript $[i-1]$ denotes the coefficient acting on $w_{t-i}$. The DAC policy generalizes the linear controller $u_t=-K x_t$ and is rich enough to represent a broad class of controllers \cite{Hazan2022OnlineControl}, including the optimal linear policy in the LQR setting \cite{Agarwal2018OnlineControl}. The following proposition states an important property of DAC policies.

\begin{proposition}
	Suppose the DAC controller $\pi\left(K, M_t\right)$ is applied at time $t$. Then the reached state satisfies the following recurrence for any $h\geq 0$, 
	$$x_{t+1}=\widetilde{A}_K^{h+1} x_{t-h}+\sum_{i=0}^{H+h} \Psi_{t, i}^{K, h}\left(M_{t-h: t}\right) w_{t-i},$$ 
	%and $$u_t^K\left(M_{0: t}\right)=-K x_t^K\left(M_{0: t-1}\right)+\sum_{i=1}^H M_t^{[i]} w_{t-i},$$ 
	where $\widetilde{A}_K=A-B K$ and $$\Psi_{t, i}^{K, h}\left(M_{t-h: t}\right)=\widetilde{A}_K^i \mathbf{1}_{i \leq h}+ \sum_{j=0}^h \widetilde{A}_K^j B M_{t-j}^{[i-j-1]} 1_{1 \leq i-j \leq H}.$$
\end{proposition}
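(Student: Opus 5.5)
We prove the recurrence by induction on $h\ge 0$, using only the dynamics \eqref{linear_dyn_sys} and the definition of the DAC policy. The implicit convention is that the policy $\pi(K,M_s)$ is applied at each time $s\in\{t-h,\dots,t\}$, and that $w_i=0$ for $i<0$.

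\emph{Base case $h=0$.} Substitute $u_t=-Kx_t+\sum_{i=1}^H M_t^{[i-1]}w_{t-i}$ into \eqref{linear_dyn_sys}. This gives
\[
x_{t+1}=\widetilde{A}_K x_t+w_t+\sum_{i=1}^H BM_t^{[i-1]}w_{t-i}.
\]
Now read off $\Psi^{K,0}_{t,i}$ from the formula in the statement. For $i=0$, the indicator $\mathbf{1}_{i\le 0}$ contributes $\widetilde{A}_K^0=I$, and the $M$-term vanishes because $1\le i-j$ fails. For $1\le i\le H$, the indicator $\mathbf{1}_{i\le 0}$ is zero, and only $j=0$ survives, contributing $BM_t^{[i-1]}$. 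Both cases match the display, which settles $h=0$.

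\emph{Inductive step.} Assume the identity holds for $h$. Apply the base-case identity at time $t-h-1$:
\[
x_{t-h}=\widetilde{A}_K x_{t-h-1}+\sum_{k=0}^{H}\Psi^{K,0}_{t-h-1,k}(M_{t-h-1})\,w_{t-h-1-k}.
\]
Plug this into the inductive hypothesis. The state term becomes $\widetilde{A}_K^{h+2}x_{t-h-1}$. Reindex the new disturbance terms with $i=k+h+1$, so they cover $i\in\{h+1,\dots,H+h+1\}$ and carry coefficient $\widetilde{A}_K^{h+1}\Psi^{K,0}_{t-h-1,i-h-1}$.

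It remains to check that adding these to the old coefficients $\Psi^{K,h}_{t,i}$, for $i\le H+h$, yields $\Psi^{K,h+1}_{t,i}$ for all $i\le H+h+1$. This is a split into pieces:
\begin{itemize}
\item[(a)] The identity part $\widetilde{A}_K^{h+1}\cdot I$, arising at $i=h+1$, extends $\widetilde{A}_K^i\mathbf{1}_{i\le h}$ to $\widetilde{A}_K^i\mathbf{1}_{i\le h+1}$.
\item[(b)] The $M$ part $\widetilde{A}_K^{h+1}BM_{t-h-1}^{[i-h-2]}\mathbf{1}_{1\le i-h-1\le H}$ is exactly the new summand $j=h+1$ in $\sum_{j=0}^{h+1}\widetilde{A}_K^jBM_{t-j}^{[i-j-1]}\mathbf{1}_{1\le i-j\le H}$.
\item[(c)] For the index range, every old term with $i\le H+h$ keeps its form. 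At $i=H+h+1$, all summands with $j\le h$ vanish because $i-j>H$. So extending the summation range to $H+h+1$ introduces no spurious terms.
\end{itemize}

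The main obstacle is this index bookkeeping: confirming that the indicator functions make the reindexed sums line up exactly, especially at the boundary indices $i=h+1$ and $i=H+h+1$. Once this is checked, the induction closes. The claim for $h\ge0$ with arbitrary $t$ then follows, with terms involving negative time indices vanishing by the convention $w_i=0$.
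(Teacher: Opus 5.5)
Your proof is correct: the base case reads off $\Psi^{K,0}_{t,i}$ properly, and in the inductive step the substituted block $\widetilde{A}_K^{h+1}\Psi^{K,0}_{t-h-1,i-h-1}(M_{t-h-1})$ supplies exactly the new identity term at $i=h+1$ and the new $j=h+1$ summand. At $i=H+h+1$ every other contribution vanishes; the $\widetilde{A}_K^{i}$ term in particular vanishes because $H\ge 1$. The paper does not prove this proposition itself but imports it from Agarwal et al., where it follows from the same unrolling of $x_{s+1}=\widetilde{A}_K x_s+w_s+\sum_{r=1}^{H}BM_s^{[r-1]}w_{s-r}$ and reindexing, so your induction is essentially the standard argument.
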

The matrix $\Psi_{t, i}^{K, h}\left(M_{t-h: t}\right)$ is a transfer matrix that describes the effect of $w_{t-i}$ with respect to the past $h+1$ policies on the state $x_{t+1}$. When $M$ is the same across all arguments we compress the notation to $\Psi_{t, i}^{K, h}(M)$.
\par In the control setting, the loss incurred at time step $t$ depends on all past decisions due to the counter-factual nature of regret. Since both the state $x_t$ and control input $u_t$ are linear functions of the DAC parameters $M_0, \ldots, M_t$, the cost $c_t\left(x_t^K\left(M_{0: t-1}\right), u_t^K\left(M_{0: t}\right)\right)$ becomes a linear function of the historical parameters $M_{0: t}$. This introduces a fundamental challenge that the effective memory length grows unbounded over time, which is infeasible in the OCO with memory framework. To address this, our optimization approach considers only the effects of the past $H$ steps during planning, disregarding the system state at time $t-H$. We will show later that this truncation scheme tracks the true cumulative cost with only a small approximation error. To formalize this idea, we introduce the notion of a truncated state and truncated loss, which enable a finite-memory approximation of the original dynamical system. 
{
	\begin{definition}\label{f_t_def}
		(Truncated Loss). For the cost function $c_t: \mathbb{R}^{d_x} \times \mathbb{R}^{d_u} \mapsto \mathbb{R}$ and DAC policies $\left\{\pi\left(K, M_t\right)\right\}_{t=1, \ldots, T}$, given a memory length $H$, the induced truncated loss $f_t: \mathcal{M}^{H+2} \mapsto \mathbb{R}$ is defined as follows.
		\par For $t>H$,
		$$
		f_t\left(M_{t-1-H: t}\right)=\mathbb{E} \left[ c_t\left( y_t^K(M_{t-1-H:t-1}), v_t^K(M_{t-1-H:t}) \right) \right],
		$$
		where the expectation is taken over $\{w_{t-i}\}_{i=0}^{2H}$, the truncated state and truncated DAC control are $$y_{t+1}^K(M_{t-H:t})=\sum_{i=0}^{2 H} \Psi_{t, i}^{K, H}\left(M_{t-H: t}\right) w_{t-i},$$ and 
		$$
		v_{t+1}^K(M_{t-H:t+1})=-K y_{t+1}^K\left(M_{t-H: t}\right)+\sum_{i=1}^H M_{t+1}^{[i-1]} w_{t+1-i} .
		$$
		\par For the initial stages $t\leq H$, the truncated loss is defined analogously by using all available past disturbances and policy parameters:
		$$
		f_t\left(M_{0: t}\right)=\mathbb{E} \left[ c_t\left( y_t^K(M_{0: t-1}), v_t^K(M_{0: t}) \right) \right],
		$$
		where the truncated state and control are defined with summations truncated to the available indices, i.e.,
		$$
		y_{t+1}^K\left(M_{0: t}\right)=\sum_{i=0}^{2 t} \Psi_{t, i}^{K, t}\left(M_{0: t}\right) w_{t-i},
		$$
		and 
		$$\quad v_{t+1}^K\left(M_{0: t+1}\right)=-K y_{t+1}^K\left(M_{0: t}\right)+\sum_{i=1}^t M_{t+1}^{[i-1]} w_{t+1-i}. $$
	\end{definition} 
	\par The dependence of the truncated state on disturbances extends to $2H$ steps due to the recursive interaction between the system dynamics and the disturbance-action controller. While the control input depends explicitly on the most recent $H$ disturbances, the state is recursively generated from past controls, which themselves depend on earlier disturbances. Unrolling this recursion shows that disturbances up to $w_{t-2H}$ can influence $y_{t+1}^K$, effectively doubling the memory length.}
The approximation error introduced by the truncation, the discrepancy between $f_t$ and $c_t$, can be precisely bounded, as established in Lemma \ref{trunc_approx_err}. This allows us to replace the original loss with the truncated loss $f_t$, which avoids the issue of unbounded memory while preserving accuracy. As a result, the online control problem admits a well-defined reformulation as an OCO problem with memory, completing the reduction.
\subsection{Online Optimization Algorithm with Prediction}\label{online_pred_sec}
The above reduction provides a foundation for developing efficient online optimization algorithms with memory, which enables the design of online controllers that are competitive with time-varying comparator policies. In many sequential decision-making problems, accurate short-term predictions of future cost functions are often available, while abrupt and large decision changes are undesirable in practice. Motivated by these observations, we further study online optimization with short-term predictions and introduce the following optimization objective: 
\begin{align}\label{rh_obj}
g_T^{RH}(\mathbf{M})=\sum_{t=1}^{T} \tilde{f}_{t}(M_t)+\sum_{t=1}^{T}\lambda  {\|M_{t}-M_{t-1}\|_F},
\end{align}
where $\mathbf{M}=(M_1,\cdots,M_T)$ and $\lambda\geq 0$ is a penalty parameter.
{
The expected surrogate loss is defined by
\begin{equation}
\tilde{f}_t(M) \triangleq f_t(M,\ldots, M),
\end{equation}
where $f_t(\cdot)$ is the truncated loss introduced in Definition \ref{f_t_def}. The cost $\tilde{f}_t(M_t)$ is obtained by re-parameterizing the original state-input cost $c_t(x_t, u_t)$ under the DAC framework, where both the state $x_t$ and the control input $u_t$ can be expressed as linear functions of the policy parameter $M_t$.} {
For notation convenience, we use $M_t^*$ to denote an arbitrary dynamic comparator sequence used to define the dynamic policy regret. In contrast, we define $M_{t,RH}^*$ as the global minimizer of the regularized total cost $g_T^{RH}$ in \eqref{rh_obj}.
}
% We refer to $g_T^{RH}$ in \eqref{rh_obj} as the Global Regularized Objective. The phrase "windowed receding-horizon objective" is reserved for the local shifted objective $g_{t,W}^{RH}$ defined over the lookahead window $[t,t+W-1]$. We also standardize terminology throughout the manuscript by using "time step" to denote the index $t$ and "inner proximal iteration" to denote the iteration index $k$ used inside the lookahead optimization.}
{
\par The objective function \eqref{rh_obj} consists of the stage cost $\tilde f_t(\cdot)$ together with a total-variation regularizer on successive DAC policy parameters. The regularizer is introduced to control temporal variations of the learned disturbance-action policy. Since the DAC policy maps past disturbances to the current control action, abrupt variations in $M_t$ may induce undesirable fluctuations in the closed-loop response. The regularizer therefore enforces temporal consistency of the learned disturbance-action policy and promotes smoother control behavior.
\par 
This formulation differs from existing prediction-aided online control and receding-horizon gradient methods
\cite{NEURIPS2020_155fa095,NEURIPS2019_6d1e481b,zhang2021regretanalysisonlinelqr},
which typically exploit lookahead information through smooth finite-horizon objectives or LQR-specific structures. In contrast, in our setting, the DAC parameterization
converts the control problem into an OCO-with-memory problem, where the loss at time $t$ depends on a truncated history of policy parameters induced by past disturbances and state transitions. For a time-varying comparator sequence, the dynamic policy regret is naturally characterized by the first-order path length, motivating the
total-variation regularizer in \eqref{rh_obj}. The resulting non-smooth receding-horizon subproblem is therefore handled by the joint proximal-gradient refinement developed below.}
\begin{algorithm*}
	\caption{ Predictive online control (POC) algorithm}\label{POC_alg}
	\begin{algorithmic}[1] 
		\State Initialize: Step size $\eta$, Parameters $\kappa_B, \kappa, \gamma, T, \mathcal{M}, H=\lceil 2 \gamma^{-1}\log T \rceil$, {penalty parameter $\lambda=(H+2)^2 L_f$.}
		%\State Define $H=\gamma^{-1} \log \left(T \kappa^2\right)$.
		%\State Define $\mathcal{M}=\left\{M=\left\{M^{[0]} \ldots M^{[H-1]}\right\}:\left\|M^{[i-1]}\right\| \leq \kappa^3 \kappa_B(1-\gamma)^i\right\}$.
		\State Initialize $M_1(0)=M_0\in \mathcal{M}$ arbitrarily.
        \State Receive initial predictions $\tilde{f}_1, \dots, \tilde{f}_{W-1}$.
        \For{$s = 1, \dots, W-1$}
            \State $M_{s+1}(0) = \Pi_{\mathcal{M}} \left[ M_s(0) - \delta \nabla \tilde{f}_s(M_s(0)) \right]$
        \EndFor
		\For {$t=1,\cdots,T$}
		\State Update $M_{t+W}$ by OGD \eqref{OGD_up}.
		\begin{align}\label{OGD_up}
		M_{t+W}(0)=\Pi_{\mathcal{M}}\left[M_{t+W-1}(0)-\delta \nabla {\tilde{f}}_{t+W-1}(M_{t+W-1}(0))\right]
		\end{align}

		\State Set $\mathbf{M}_{t,W}(0) = (M_t(0), M_{t+1}(0), ..., M_{t+W-1}(0))$  %滚动继承上一轮的值，末尾用 OGD 初始化
		\For {$k = 1$ to $W$ }  % Line 7 改变：不再循环时间步 s，而是循环迭代步数 k
		\State \begin{align}\label{inn_prox_up}
		\mathbf{M}_{t,W}(k)
		=&\arg\min_{\mathbf{Z}\in\mathcal{M}^{W}}\bigg\{
		R_t(\mathbf{Z})+\frac{1}{2\eta}
		\big\|\mathbf{Z}
		-\big(\mathbf{M}_{t,W}(k-1)-\eta\nabla F_t(\mathbf{M}_{t,W}(k-1))\big)\big\|_F^2
		\bigg\}.
		\end{align}
		% \State where $F(\mathbf{M}_W)=\sum_{s=t}^{t+W-1} \tilde{f}_s(M_s)$ and $R_t(\mathbf{M}_W)=\sum_{s=t}^{t+W-1} \lambda\left\|M_s-M_{s-1}\right\|_F+\lambda\left\|M_{t+W}^{(0)}-M_{t+W-1}\right\|_F$.
		\EndFor
		\State Set $M_{t}=[\mathbf{M}_{t,W}(W)]^{(1)}$.
        \State Choose the action: $u_t=-K x_t+\sum_{i=1}^H M_{t}^{[i-1]} w_{t-i}$.
		\State Observe the new state $x_{t+1}$ and record $w_t=x_{t+1}-A x_t-B u_t$.
		%\State Online Gradient Update: $M_{t+1}=\Pi_{\mathcal{M}}\left(M_t-\eta_t \nabla f_t\left(M_t\right)\right)$.
	\EndFor
	\end{algorithmic}
\end{algorithm*}
\subsection{Predictive Online Control Algorithm}
\par Building on the reduction framework in Section \ref{red_oco_sec} and the online optimization with prediction introduced in Section \ref{online_pred_sec}, we propose a predictive online control (POC) algorithm for minimizing dynamic policy regret in online control. The complete procedure is presented in Algorithm \ref{POC_alg}. The proposed algorithm consists of two main components: 
\par (1) DAC-based reduction: We adopt the DAC policy $u_t=\pi(K,M_t)$ to represent the control actions, where $K$ is a fixed time-invariant stabilizing matrix. Under this parameterization, the original control problem can be reformulated as an optimization problem over the policy parameter $M_t$. Based on the truncated system representation, the induced surrogate loss $\tilde{f}_t: \mathcal{M}\to \mathbb{R}$ is $\tilde{f}_t(M_t)=f_t(M_t,\cdots, M_t),$
and the decision set is given by 
\begin{align}\label{M_set_def}
\mathcal{M}=\Big\{&M=\big\{M^{[0]} \ldots M^{[H-1]}\big\}:
\notag\\
&\quad \big\|M^{[i-1]}\big\| \leq \kappa^3 \kappa_B(1-\gamma)^i\Big\}.
\end{align}
Here, $M_t=\left(M_t^{[0]}, \ldots, M_t^{[H-1]}\right)$ denotes the collection of disturbance-action parameters over the memory horizon at time $t$, and $\|\cdot\|$ denotes the spectral norm. The constraint set $\mathcal{M}$ admits a separable structure as a Cartesian product of block-wise spectral-norm balls with explicitly specified radii. Moreover, the projection $\Pi_{\mathcal{M}}$ used in the update step is the Euclidean (Frobenius-norm) projection onto $\mathcal{M}$, which decomposes into independent block-wise projections. Each block's projection onto a spectral-norm ball can be computed by SVD-based singular-value clipping, making the projection step computationally tractable.  %Moreover, the projection $\Pi_{\mathcal{M}}$ can be implemented block by block via standard projection onto spectral-norm balls, which is computationally tractable and can be carried out using singular-value decomposition.
\par (2) Predictive online optimization: For the resulting OCO problem with memory, POC uses the available lookahead surrogate losses to update the DAC parameter before applying the control action. Since the surrogate loss $\tilde f_t$ is an expected surrogate loss, we assume access to an exact expected-gradient oracle for $\nabla \tilde f_t(M)$. The algorithm maintains a rolling initialization sequence $\{M_s(0)\}$, where the initial predictions $\tilde f_1,\ldots,\tilde f_{W-1}$ generate $M_2(0),\ldots,M_W(0)$ by OGD, and at each time step $t$ the newly available prediction $\tilde f_{t+W-1}$ extends this sequence via  
\begin{align}
M_{\iota+1}(0)=\Pi_{\mathcal{M}}\left[M_{\iota}(0)-\delta \nabla \tilde{f}_{\iota}\left(M_{\iota}(0)\right)\right],
\end{align}
where $\iota =t+W-1$ and $\delta>0$ is the OGD step-size. The point $M_{t+W}(0)$ provides the fixed right boundary for the current window rather than an optimization variable.
%{\color{blue} Since the disturbance distribution is assumed to be known and stationary, the controller can access an exact expected gradient oracle, enabling the analytical computation of $\nabla \tilde{f}_t(M)$. This ensures that the proximal gradient updates within the lookahead window are strictly deterministic.} 
% For ease of notation, we extend the definitions as follows. Let ${f}_t(\cdot)=0$ for $t\leq 0$ or $t> T$, $u_t=u_0$, $x_t=x_0$, $M_t=M_0$ for $t\leq 0$. In addition, we define $M_t(k)=M_0$ for $t\leq 0$ and $k\geq 0$ when necessary. 
% \par At time step $t$, the algorithm first constructs an OGD warm start for the terminal boundary point of the lookahead window. This update uses only the surrogate losses that are available at time step $t$, i.e. $\tilde{f}_1, \ldots, \tilde{f}_{t+W-1}$ and previously computed iterates. Define $\iota =t+W-1$, then the OGD update is given by 
% \begin{align}
% M_{\iota+1}(0)=\Pi_{\mathcal{M}}\left[M_{\iota}(0)-\delta \nabla \tilde{f}_{\iota}\left(M_{\iota}(0)\right)\right],
% \end{align}
% where the step-size $\delta>0$ and $M_{t+W-1}(0)$ is available from the previous stage $t-1$. 
% The resulting $M_{t+W}(0)$ is then treated as the fixed right boundary for the windowed receding-horizon problem at time step $t$. 
\par Given the active window
\[
\mathbf{M}_{t,W}(k)\triangleq \left(M_t(k),M_{t+1}(k),\ldots,M_{t+W-1}(k)\right)\in \mathcal{M}^W .
\]
With the left boundary $M_{t-1}$ fixed by the previous control step and the right boundary $M_{t+W}(0)$ fixed by the OGD update, define
% The initial vector $\mathbf{M}_{t,W}(0)$ is obtained by shifting the previously stored iterates over the window. Starting from this initialization, the algorithm performs $W$ constrained proximal-gradient refinements on the composite windowed objective. Let
\begin{align}\label{F_R_Def_alg}
F_t(\mathbf{M}_{t,W})&\triangleq \sum_{s=t}^{t+W-1}\tilde f_s(M_s), \notag\\
R_t(\mathbf{M}_{t,W})&\triangleq \sum_{s=t}^{t+W-1}\lambda\|M_s-M_{s-1}\|_F\notag\\
&\quad +\lambda\|M_{t+W}(0)-M_{t+W-1}\|_F ,
\end{align}
where $F_t$ collects the lookahead surrogate losses and $R_t$ is the windowed total-variation regularizer on the DAC policy
parameters. Since $R_t$ is non-smooth, the receding-horizon objective has a composite structure, which motivates the joint proximal-gradient update developed below.

\par Starting from $\mathbf{M}_{t,W}(0)$, POC performs $W$ joint constrained proximal-gradient refinements
\begin{align}
&\mathbf{M}_{t,W}(k)
=\arg\min_{\mathbf{Z}\in\mathcal{M}^{W}}\bigg\{
R_t(\mathbf{Z})
\notag\\
&+\frac{1}{2\eta}
\left\|\mathbf{Z}-\left(\mathbf{M}_{t,W}(k-1)-\eta\nabla F_t\left(\mathbf{M}_{t,W}(k-1)\right)\right)\right\|_F^2
\bigg\},
\end{align}
where $\eta>0$ is the proximal-gradient step-size. This joint proximal-gradient step handles the non-smooth switching regularizer in the receding-horizon objective while simultaneously refining all decision variables in the lookahead window. After $W$ inner proximal iterations,  the first block is selected as $[\mathbf{M}_{t,W}(W)]^{(1)}$, which is then used in the DAC policy to generate the control action. The new state is observed afterward, and $w_t=x_{t+1}-Ax_t-Bu_t$ is recovered for future updates.
% \begin{align}
% M_s(k)=\arg \min _{M \in \mathcal{M}}\left\{R_s(M)+\frac{1}{2 \eta}\left\|M-\hat{M}_s(k-1)\right\|_F^2\right\},
% \end{align}
% where $\hat{M}_s(k-1)\triangleq M_s(k-1)-\eta \nabla \tilde{f}_s\left(M_s(k-1)\right)$, $R_s(M)=\lambda \|M-M_{s-1}(k-1)\|_F+\lambda \|M_{s+1}(k-1)-M\|_F$, the step-size $\eta>0$, and $k=t+W-s$. The proximal gradient descent update is designed to handle the non-smooth regularization term of the receding horizon objective $g_T^{RH}$ in \eqref{rh_obj}. The update is performed iteratively from $t+W-1$ down to $t$, which allows the algorithm to compute the sequence $M_{t+W-1}(1), M_{t+W-2}(2),\cdots,M_t(W)$. Finally, the algorithm outputs $M_t(W)$, which is used to construct the control action at next stage. 
\begin{remark}
	In contrast to the existing work \cite{Zhao2023NonStationaryOnlineLearning}, which relies on meta-base decomposition and develops a two-level algorithmic structure, the proposed algorithm adopts a single-loop update scheme. This streamlined structure simplifies the algorithmic design, reducing computational overhead and improving practical implementability. Moreover, our method explicitly incorporates predicted cost functions into the online decision-making process. In particular, the predictions are treated as exogenous inputs, rather than being generated by the control algorithm itself. While this separation allows for a clean integration of predictive information, the development of a unified framework that jointly performs prediction and online control remains an important direction for future research.
\end{remark}

\section{Policy Regret Analysis}\label{analysis_sec}
This section presents the policy regret analysis of the proposed POC algorithm. We first introduce several standard assumptions commonly adopted in the literature, and then provide the regret analysis.

\begin{assumption}\label{dynamic_ass}
	The matrices governing the system dynamics are bounded, i.e., $\|A\| \leq \kappa_A,\|B\| \leq \kappa_B$. The disturbance at each time step is bounded, i.i.d, and zero-mean with a lower bounded covariance, i.e.,
	\[
	\forall t,\; w_t \sim \mathcal{D}_w,\; \mathbb{E}\left[w_t\right]=0,\;
	\mathbb{E}\left[w_t w_t^{\top}\right] \succeq \sigma^2 I \;\text{ and }\;\left\|w_t\right\| \leq \bar{w}.
	\]
\end{assumption}
\begin{assumption}\label{obj_ass}
	The costs $c_t(x, u)$ are $\alpha$-strongly convex and $l$-smooth. Further, as long as it is guaranteed that $\|x\|,\|u\| \leq D$, it holds that 
	
	$$
	\left\|\nabla_x c_t(x, u)\right\|,\left\|\nabla_u c_t(x, u)\right\| \leq G D .
	$$
	
	%{\color{blue}Then, $\|\nabla \tilde{f}_{t-1}(M_{t-1})\|\leq G$. And $g_T^{RH}(\cdot)$ and $\tilde{f}_t$ is strongly convex and $L$ smooth.}
\end{assumption}
\begin{assumption}\label{DAC_ass}
	The DAC controller $\pi(K,M)$ satisfies
	\begin{itemize}
		\item[(1)] The feedback matrix $K$ is $(\kappa,\gamma)$-strongly stable, i.e., for real constants $\kappa\geq 1$ and $\gamma\in (0,1)$, there exists a complex diagonal matrix $\Lambda$ and a non-singular complex matrix $Q$, such that $A-B K=Q \Lambda Q^{-1}$ and the following conditions are met:
		\begin{itemize}
			\item The spectral norm of $\Lambda$ is strictly smaller than one, i.e., $\|\Lambda\| \leq 1-\gamma$.
			\item The controller and the transformation matrices are bounded, i.e., $\|K\| \leq \kappa$ and $\|Q\|,\left\|Q^{-1}\right\| \leq \kappa$.
		\end{itemize}
		\item[(2)] The disturbance-action parameter $M$ lies in a constrained set $\mathcal{M}$ defined in \eqref{M_set_def}. %$$\mathcal{M}=\{M=(M^{[0]},\cdots, M^{[H-1]})|\|M^{[i-1]}\|\leq \kappa^3\kappa_B(1-\gamma)^i\}.$$
	\end{itemize}
\end{assumption}
\begin{remark}
	These assumptions are standard and widely adopted in the online control and 
	online convex optimization literature \cite{Agarwal2018OnlineControl,agarwal2019logarithmic,survey_online,YANG2025112525}. Assumption~\ref{dynamic_ass} imposes mild regularity conditions on the system dynamics and disturbances. The boundedness of $A$ and $B$ is a standard requirement to ensure the system is well-posed, while the i.i.d., zero-mean disturbance with a lower-bounded covariance is a common statistical condition that captures a wide range of practical noise models \cite{regret_LQ_subGua,ABBASZADEHCHEKAN2023110876}. The gradient boundedness condition in Assumption~\ref{obj_ass} is a natural consequence of the compactness of the state-input domain. 
	Assumption~\ref{DAC_ass} characterizes the structure of the DAC policy class. The $(\kappa, \gamma)$-strong stability of $K$ guarantees closed-loop stability under the base controller, while the constraint set $\mathcal{M}$ on the disturbance-action parameters bounds the magnitude of the DAC coefficients and ensures the tractability of the policy optimization problem. This DAC-based policy parameterization has been widely adopted in prior works \cite{Agarwal2018OnlineControl,agarwal2019logarithmic,Zhao2023NonStationaryOnlineLearning,online_fw_cdc}.
\end{remark}
%\par The next lemma is to highlight the reduction to OCO with memory. It shows that achieving {\color{blue}\textit{low policy regret}} on the memory-based function $f_t$ is sufficient to ensure low regret on the overall dynamical system.
Under these assumptions, the following lemmas provide vital properties for the truncated loss function $f_t$.
% \begin{lemma}[Lemma 5.2, \cite{agarwal2019logarithmic}]\label{f_t_lips}
%     The norm of the gradients of function $f_t$ is bounded by $G_f\leq G D H d \bar{w}\left(H+\frac{2 \kappa_B \kappa^3}{\gamma}\right)$, where $$D \triangleq \frac{\bar{w} \kappa^3\left(1+H \kappa_B^2 \kappa^3\right)}{\gamma\left(1-\kappa^2(1-\gamma)^{H+1}\right)}+\frac{\kappa_B \kappa^3 \bar{w}}{\gamma}.$$
% \end{lemma}

% \begin{lemma}[Lemma 4.2, \cite{agarwal2019logarithmic}]
%      The surrogate  loss function $\tilde{f}_t(M)$ are $\mu$-strongly convex with respect to $M$ where $\mu=\frac{\alpha \sigma^2 \gamma^2}{36 \kappa^{10}}$.
% \end{lemma}
\begin{lemma}
	\label{f_t_lips}
	Under Assumptions \ref{dynamic_ass}, \ref{obj_ass} and \ref{DAC_ass}, the following properties hold:
	\begin{itemize}
		\item \textbf{Lipschitz continuity and gradient bound.} \cite[Lemma 29]{Zhao2023NonStationaryOnlineLearning}
		The truncated loss $f_t$ is $L_f$-coordinate-wise Lipschitz, i.e.,
		\begin{align*}
		&|f_t(M_{t-H-1},\cdots,M_{t-k},\cdots,M_t)\\
		&\quad -f_t(M_{t-H-1},\cdots,\tilde{M}_{t-k},\cdots, M_t)|\\
		&\leq L_f\|M_{t-k}-\tilde{M}_{t-k}\|_F,
		\end{align*}
		where the Lipschitz constant satisfies
		\[L_f\leq 3\sqrt{H}GD\bar{w}\kappa_B\kappa^3.\]
		Moreover, the gradient of the surrogate loss $\tilde{f}_t$ is uniformly bounded as
		\[
		 \|\nabla \tilde{f}_t\|_F  
		\le G_f\triangleq 3Hd^2G\bar{w}\kappa_B\kappa^3\gamma^{-1},
		\]
		where the constant $D$ is defined by
		\begin{align}\label{D_def_eq}
		D \triangleq \frac{\bar{w}\kappa^3\left(1+H\kappa_B^2\kappa^3\right)}
		{\gamma\left(1-\kappa^2(1-\gamma)^{H+1}\right)}
		+\frac{\kappa_B\kappa^3\bar{w}}{\gamma}.
		\end{align}
		
		\item \textbf{Strong convexity.}  \cite[Lemma 4.2]{agarwal2019logarithmic}
		The surrogate loss $\tilde{f}_t(M)$ is $\mu$-strongly convex with respect to $M$, where
		\[
		\mu=\frac{\alpha\sigma^2\gamma^2}{36\kappa^{10}}.
		\]
        % \item \textbf{Smoothness.} 
        % By Assumption \ref{obj_ass} and the linear DAC mapping, the surrogate loss $\tilde{f}_t(M)$ is $L$-smooth over $\mathcal{M}$ with the smoothness constant $L = \mathcal{O}(lD^2)$.
%         Since the original cost $c_t(x,u)$ is $l$-smooth (Assumption III.2) and the truncated state and control actions are linear transformations of the DAC parameter $M$, the expected surrogate loss $\tilde{f}_t(M)$ is also smooth. Specifically, based on the chain rule for the Hessian, the smoothness constant scales with the square of the dynamic mapping bound $D$. Thus, $\tilde{f}_t(M)$ is $L$-smooth with respect to $M$, where $L = \mathcal{O}(l D^2)$. For any $M, M' \in \mathcal{M}$, it holds that:
% $$ \|\nabla \tilde{f}_t(M) - \nabla \tilde{f}_t(M')\|_F \le L \|M - M'\|_F. $$
	\end{itemize}
\end{lemma}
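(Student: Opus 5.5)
The plan is to derive both properties from the structure established in Section~\ref{red_oco_sec}: the truncated state $y_t^K$ and the truncated control $v_t^K$ are \emph{linear} in the policy parameters and in the disturbances, and the cost $c_t$ is regular by Assumption~\ref{obj_ass}. The argument follows \cite{Agarwal2018OnlineControl,agarwal2019logarithmic,Zhao2023NonStationaryOnlineLearning}.

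\emph{Step 1 (a priori bounds on states and controls).} Strong stability (Assumption~\ref{DAC_ass}) gives $\|\widetilde{A}_K^j\|\le \kappa^2(1-\gamma)^j$. Every $M\in\mathcal{M}$ satisfies $\|M^{[i-1]}\|\le \kappa^3\kappa_B(1-\gamma)^i$. Substituting both into $\Psi_{t,i}^{K,H}$ and summing the resulting geometric series gives $\|\Psi_{t,i}^{K,H}\|\le \kappa^2(1-\gamma)^i + H\kappa_B^2\kappa^5(1-\gamma)^{i-1}$. Combined with $\|w_t\|\le\bar w$, this yields $\|y_t^K\|,\|v_t^K\|\le D$ with $D$ as in \eqref{D_def_eq}. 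The factor $1-\kappa^2(1-\gamma)^{H+1}$ comes from absorbing the residual $\widetilde{A}_K^{H+1}x_{t-H}$ term in the recurrence. Once these bounds hold, Assumption~\ref{obj_ass} ensures that the gradients of $c_t$ along the trajectory are bounded by $GD$.

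\emph{Step 2 (coordinate-wise Lipschitz property and gradient bound).} Change only $M_{t-k}$ to $\tilde M_{t-k}$. The state then changes by $\sum_i \widetilde{A}_K^{k}B(M_{t-k}^{[i-k-1]}-\tilde M_{t-k}^{[i-k-1]})w_{t-i}$. Its norm is at most $\kappa^2\kappa_B\bar w\sum_{j}\|\Delta^{[j]}\|\le \kappa^2\kappa_B\bar w\sqrt{H}\|\Delta\|_F$, where the last inequality is Cauchy--Schwarz over the $H$ blocks. The control changes by $K$ times this, plus a direct term when $k=0$. Because $c_t$ is differentiable with gradients bounded by $GD$ on the ball of radius $D$, the mean-value theorem and the expectation give $|f_t-f_t'|\le 3\sqrt H GD\bar w\kappa_B\kappa^3\|\Delta\|_F$. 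The constant $3$ absorbs the contributions from $\nabla_x$, from $-K\nabla_x$, and from the direct $u$-term.

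For $\tilde f_t$, the chain rule sums the contributions of all $H+2$ copies. The geometric decay $(1-\gamma)^j$ in $\widetilde{A}_K^j$ limits the total to a factor of order $\gamma^{-1}$. Bounding entrywise, with up to $d^2$ entries per block, gives $G_f = 3Hd^2G\bar w\kappa_B\kappa^3\gamma^{-1}$.

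\emph{Step 3 (strong convexity).} Write $\tilde f_t(M)=\mathbb{E}[c_t(y(M),v(M))]$, where $(y,v)$ is affine in $M$. Composition with an affine map preserves convexity. By $\alpha$-strong convexity of $c_t$, $\tilde f_t$ is therefore $\alpha\lambda_{\min}$-strongly convex, where $\lambda_{\min}$ is the smallest eigenvalue of $\mathbb{E}[J^\top J]$ and $J$ is the linear map $M\mapsto (y,v)$. To lower bound $\mathbb{E}\|v\|^2$, I isolate the terms $\sum_i M^{[i-1]}w_{t-i}$ together with their propagation through the dynamics. Since the $w$'s are independent, zero-mean, and have covariance $\succeq\sigma^2 I$, cross terms vanish in expectation. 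What remains is a quadratic form in a block-Toeplitz operator built from $M$ and $\widetilde{A}_K^jB$. Applying a frequency-domain (polynomial) lower bound as in \cite[Lemma 4.2]{agarwal2019logarithmic} gives $\mathbb{E}\|J(M)\|^2\ge \frac{\sigma^2\gamma^2}{36\kappa^{10}}\|M\|_F^2$.

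The main obstacle is this last step. The Toeplitz operator might nearly cancel $M$, and ruling that out requires the transfer-function argument that bounds the inverse of $I+K(zI-\widetilde{A}_K)^{-1}B$ on the unit circle using $(\kappa,\gamma)$-stability. That is the source of the $\gamma^2/\kappa^{10}$ factor. I would follow the cited proof for this part, checking only that the truncation to $2H$ disturbances does not remove the relevant terms.
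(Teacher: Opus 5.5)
Your Steps 1 and 2 reproduce the standard arguments behind the results the paper only cites, and they are fine up to constants. The paper itself gives no proof and defers to Zhao et al.\ (Lemma 29) and Agarwal--Hazan--Singh (Lemma 4.2). The problem is the mechanism you propose for Step 3. If carried out as you describe, it fails.

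You plan to lower-bound $\mathbb{E}\|v\|^2$ alone through the block-Toeplitz map from $\Delta M=M-N$ to the coefficients of $\Delta v$. You then want to exclude cancellation by bounding an inverse transfer function using $(\kappa,\gamma)$-stability. Write $d(\Delta M):=\sum_{i=1}^{H}\Delta M^{[i-1]}w_{t-i}$. Then $\Delta v=d(\Delta M)-K\Delta y$, where $\Delta y$ is the past DAC terms passed through (a truncation of) $(zI-\widetilde A_K)^{-1}B$. So the relevant symbol is $\phi(z)=I-K(zI-\widetilde A_K)^{-1}B$, and its pointwise inverse is $I+K(zI-A)^{-1}B$. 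That inverse involves the open-loop $A$, which strong stability of $K$ does not control.

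The paper's own LiDAR model is a counterexample. Take $A=1$, $B=\Delta t$, and $a:=1-\Delta t\,K$; then $\phi(z)=(z-1)/(z-a)$, which vanishes at $z=1$. For $\Delta M^{[r]}\equiv c$, the coefficients of $\Delta v$ on $w_{t-1},w_{t-2},\dots$ are $c,ca,\dots,ca^{H-1}$, followed by a tail bounded by $2|c||a|^m$. Hence $\mathbb{E}\|\Delta v\|^2=\mathcal{O}(\overline{\sigma}_w^2\|\Delta M\|_F^2/(H\gamma))$, which tends to $0$ as $H$ grows. No $H$-independent constant such as $\sigma^2\gamma^2/(36\kappa^{10})$ can come from the control component alone.

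The repair is simpler than your route and needs neither Toeplitz bounds nor diagonalizability of $\widetilde A_K$: keep both components of $J$.
\begin{itemize}
\item Pathwise, $d(\Delta M)=\Delta v+K\Delta y$, so $\|d(\Delta M)\|^2\le(1+\kappa^2)(\|\Delta y\|^2+\|\Delta v\|^2)$.
\item Independence, zero mean and $\mathbb{E}[w_tw_t^\top]\succeq\sigma^2 I$ give $\mathbb{E}\|d(\Delta M)\|^2\ge\sigma^2\|\Delta M\|_F^2$.
\item Together these give $\mathbb{E}\|J\Delta M\|^2\ge\frac{\sigma^2}{1+\kappa^2}\|\Delta M\|_F^2$. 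So $\tilde f_t$ is $\frac{\alpha\sigma^2}{1+\kappa^2}$-strongly convex.
\item This implies the stated $\mu$, because $\frac{1}{1+\kappa^2}\ge\frac{1}{2\kappa^2}\ge\frac{\gamma^2}{36\kappa^{10}}$.
\end{itemize}

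This argument needs all $H$ direct terms to carry genuine disturbances, i.e.\ $t>H$. Under the paper's convention $w_i=0$ for $i<0$, $\tilde f_t$ with $t\le H$ does not depend on the highest-index blocks of $M$. So the strong-convexity claim, and your proof of it, can only hold for $t>H$.

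A minor point on Step 2: your chain rule uses $\|\nabla c_t\|\le GD$, so it gives a gradient bound of order $\sqrt{H}\,GD\bar w\kappa_B\kappa^3\gamma^{-1}$. That bound carries the factor $D$, so your argument does not reproduce the $D$-free $G_f$ printed in the statement.
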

{
Complementing the lower-bound analysis in
\cite[Lemma 4.2 and Appendix F]{agarwal2019logarithmic}, we establish a uniform upper bound on the expected Jacobian Gram matrix, yielding a smoothness constant independent of
the memory horizon.
\begin{lemma}
\label{lem:surrogate-smoothness}
Suppose that Assumptions \ref{dynamic_ass}, \ref{obj_ass} and \ref{DAC_ass} hold. Let
$$
\Sigma_w:=\mathbb{E}[w_tw_t^\top], \qquad \overline{\sigma}_w^2:=\|\Sigma_w\|.$$
% Since $\|w_t\|\le \overline w$ almost surely, it follows that
% \[
% \Sigma_w\preceq \overline{\sigma}_w^2 I
% \preceq \overline w^2 I.
% \]
Then the surrogate loss $\tilde{f}_t(M)$ is $L$-smooth over $\mathcal M$, where
$$L\le l C_{\mathrm{DAC}}^2,$$
and
$$
C_{\mathrm{DAC}}^2:=\overline{\sigma}_w^2\left[\left(\frac{\kappa^2\kappa_B}{\gamma}\right)^2+\left(1+\frac{\kappa^3\kappa_B}{\gamma}\right)^2\right].$$
%In particular, $L$ is independent of the DAC memory horizon $H$.
\end{lemma}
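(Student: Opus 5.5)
The plan is to write the truncated state and control as linear maps of the DAC parameter and then apply the chain rule to the smooth cost. Fix $M\in\mathcal M$ and the disturbance window $\mathbf w=(w_{t-1},\dots,w_{t-1-2H})$. By Definition \ref{f_t_def} with all arguments equal to $M$, we have $y_t^K(M)=\mathcal J_y(\mathbf w)\,\mathrm{vec}(M)+b_y(\mathbf w)$, where the affine part $b_y$ comes from the $\widetilde A_K^i\mathbf 1_{i\le H}$ terms. Similarly, $v_t^K(M)=\mathcal J_u(\mathbf w)\,\mathrm{vec}(M)+b_u(\mathbf w)$. Both Jacobians are linear in $\mathbf w$ and do not depend on $M$. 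Hence $\tilde f_t(M)=\mathbb E[c_t(\mathcal J\,\mathrm{vec}(M)+b)]$ with $\mathcal J=[\mathcal J_y;\mathcal J_u]$, and
\[
\nabla^2\tilde f_t(M)=\mathbb E\big[\mathcal J^\top\nabla^2 c_t\,\mathcal J\big]\preceq l\,\mathbb E[\mathcal J^\top\mathcal J]
\]
by $l$-smoothness of $c_t$. If $c_t$ is not $C^2$, we run the same argument with gradient differences: $\|\nabla\tilde f_t(M)-\nabla\tilde f_t(M')\|$ is controlled by $l\,\mathbb E\|\mathcal J(M-M')\|^2$ via co-coercivity or the descent lemma. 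It therefore suffices to show $\sup_{\|\Delta\|_F=1}\mathbb E\|\mathcal J\Delta\|^2\le C_{\mathrm{DAC}}^2$.

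For the bound, take a direction $\Delta=(\Delta^{[0]},\dots,\Delta^{[H-1]})$ with $\|\Delta\|_F=1$. The control part is
\[
\mathcal J_u\Delta=-K\,\mathcal J_y\Delta+\sum_{i=1}^H\Delta^{[i-1]}w_{t-i},
\]
and the state part is
\[
\mathcal J_y\Delta=\sum_{i}\sum_{j=0}^{H}\widetilde A_K^{j}B\Delta^{[i-j-1]}w_{t-1-i}.
\]
Regroup it as $\sum_{j}\widetilde A_K^jB\,\xi_j$ with $\xi_j:=\sum_{m}\Delta^{[m]}w_{t-2-j-m}$. The key use of the i.i.d.\ zero-mean assumption is that cross terms $\mathbb E[w_aw_b^\top]$ vanish for $a\ne b$. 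This gives $\mathbb E\|\xi_j\|^2=\sum_m\mathrm{tr}(\Delta^{[m]}\Sigma_w\Delta^{[m]\top})\le\overline\sigma_w^2\|\Delta\|_F^2$. For $\mathcal J_y\Delta$ I use Minkowski's inequality in $L^2$, $(\mathbb E\|\sum_j X_j\|^2)^{1/2}\le\sum_j(\mathbb E\|X_j\|^2)^{1/2}$, together with $\|\widetilde A_K^j\|\le\kappa^2(1-\gamma)^j$ from strong stability. This gives $(\mathbb E\|\mathcal J_y\Delta\|^2)^{1/2}\le\overline\sigma_w\kappa^2\kappa_B\sum_j(1-\gamma)^j\le\overline\sigma_w\kappa^2\kappa_B/\gamma$, which is independent of $H$. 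For the control part, Minkowski again with $\|K\|\le\kappa$ gives $(\mathbb E\|\mathcal J_u\Delta\|^2)^{1/2}\le\overline\sigma_w(1+\kappa^3\kappa_B/\gamma)$. Squaring and adding the two parts gives exactly $C_{\mathrm{DAC}}^2$.

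The main obstacle is the bookkeeping of the double sum in $\mathcal J_y$. The same disturbance $w_{t-1-i}$ receives contributions from several pairs $(j,m)$ with $j+m=i-1$, so orthogonality across disturbances cannot be applied term-by-term without creating an $H$-dependent count. I would handle this as above: group by the stability power $j$ first, apply orthogonality only inside each $\xi_j$ (where each $w$ appears once), and use the triangle inequality across $j$. The geometric decay of $\widetilde A_K^j$ then absorbs the sum over $j$ without any factor of $H$. A secondary point is that the truncation indicators $\mathbf 1_{1\le i-j\le H}$ only drop terms, so they cannot increase the bounds.
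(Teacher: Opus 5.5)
Your proposal is correct and reaches exactly the paper's constants, $C_y=\overline{\sigma}_w\kappa^2\kappa_B/\gamma$ and $C_v=\kappa C_y+\overline{\sigma}_w$. The overall skeleton is the same as the paper's: an affine DAC map, mean-square gains, Minkowski for the control part, and $l$-smoothness of $c_t$ at the end. The difference is in the key $H$-independent state-gain step, which you organize another way.

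\textbf{How the paper handles the state gain.} The paper groups the double sum by the disturbance lag $n=j+r$ and writes $\Delta y_{t+1}=\sum_n C_n(\Delta M)\,w_{t-n-1}$. Orthogonality across all lags then gives the exact identity $\mathbb E\|\Delta y_{t+1}\|^2=\sum_n\operatorname{tr}\bigl(C_n\Sigma_wC_n^\top\bigr)$. It bounds each $\|C_n\|_F$ by the scalar convolution $\sum_r\|\widetilde A_K^{n-r}B\|\,\|\Delta M^{[r]}\|_F$ and finishes with Young's inequality $\|a*m\|_2\le\|a\|_1\|m\|_2$.

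\textbf{How your route differs.} You group by the stability power $j$, use orthogonality only inside each $\xi_j$, and apply Minkowski in $L^2$ across $j$. This is precisely the standard shift-and-Minkowski proof of Young's inequality, carried out directly on the random vectors. Your version is therefore shorter and self-contained. The paper's version keeps an exact lag-by-lag second-moment formula in terms of the transfer matrices $C_n$ before discarding information. Note also that grouping by lag does not by itself force an $H$-dependent count, since the paper avoids it through Young rather than Cauchy--Schwarz on each $C_n$.

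\textbf{The final smoothness step.} Your Hessian sandwich $\nabla^2\tilde f_t\preceq l\,\mathbb E[\mathcal J^\top\mathcal J]$ works when $c_t\in C^2$, with convexity supplying the lower bound $\succeq 0$. The non-$C^2$ fallback as you wrote it, with a gradient-difference norm controlled by $l\,\mathbb E\|\mathcal J(M-M')\|^2$, is not the right inequality. The clean fix, which is exactly what the paper does, is to test against a unit direction $U$:
$|\langle U,\nabla\tilde f_t(M)-\nabla\tilde f_t(M')\rangle|\le l\,\mathbb E\bigl[\|\mathcal JU\|\,\|\mathcal J(M-M')\|\bigr]\le l\,C_{\mathrm{DAC}}^2\|M-M'\|_F$,
using Cauchy--Schwarz in expectation. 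This needs only Lipschitz gradients of $c_t$, and neither twice-differentiability nor convexity.
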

}
\begin{proof}
By the $(\kappa,\gamma)$-strong stability of $K$ in Assumption \ref{DAC_ass}, we have
for every $j\ge 0$,
\begin{equation}
\label{eq:closed-loop-decay}
\|\widetilde A_K^j\|\le\|Q\|\,\|\Lambda\|^j\,\|Q^{-1}\|\le\kappa^2(1-\gamma)^j.
\end{equation}
Consider two DAC parameters
$M=(M^{[0]},\ldots,M^{[H-1]})$ and
$N=(N^{[0]},\ldots,N^{[H-1]})$,
and define
$$\Delta M:=M-N,\qquad\Delta M^{[r]}:=M^{[r]}-N^{[r]}.$$
From the definition of the transfer matrix, the difference between the two truncated states is
\begin{align}
\Delta y_{t+1}&:=y_{t+1}^K(M)-y_{t+1}^K(N)
\notag\\
&=\sum_{r=0}^{H-1}\sum_{j=0}^{H}\widetilde A_K^j B\Delta M^{[r]} w_{t-j-r-1}.
\label{eq:delta-y-double-sum}
\end{align}
% The autonomous term
% \[
% \sum_{i=0}^H\widetilde A_K^i w_{t-i}
% \]
% does not appear in \eqref{eq:delta-y-double-sum}, since it is
% independent of $M$ and therefore cancels when taking the
% difference.
Define $A_j:=\widetilde A_K^jB$. Using \eqref{eq:closed-loop-decay} and $\|B\|\le\kappa_B$, we have
\begin{equation}\label{eq:Aj-bound}
\|A_j\|\le \kappa^2\kappa_B(1-\gamma)^j.
\end{equation}
Let $n=j+r$ and define
$C_n(\Delta M):=\sum_{\substack{0\le r\le H-1\\0\le n-r\le H}}A_{n-r}\Delta M^{[r]},\qquad n=0,\ldots,2H-1$. Then, \eqref{eq:delta-y-double-sum} satisfies
\begin{equation}\label{eq:delta-y-convolution}
\Delta y_{t+1}=\sum_{n=0}^{2H-1} C_n(\Delta M)w_{t-n-1}.
\end{equation}
\par Since the disturbances are independent and zero-mean, the cross terms associated with distinct time indices vanish, which gives
\begin{align}
\mathbb E\|\Delta y_{t+1}\|^2&=\sum_{n=0}^{2H-1}\mathbb E\left[\left\|C_n(\Delta M)w_{t-n-1}\right\|^2\right]\notag\\
&=\sum_{n=0}^{2H-1}\operatorname{tr}\left(C_n(\Delta M)\Sigma_w C_n(\Delta M)^\top \right)\notag\\
&\le \overline{\sigma}_w^2 \sum_{n=0}^{2H-1} \|C_n(\Delta M)\|_F^2.
\label{eq:delta-y-expectation}
\end{align}
Let $a_j:=\|A_j\|$, $m_r:=\|\Delta M^{[r]}\|_F$. By submultiplicativity, we obtain 
$$
\|C_n(\Delta M)\|_F \le \sum_{\substack{0\le r\le H-1\\0\le n-r\le H}} a_{n-r}m_r.
$$
Hence the sequence $\{\|C_n(\Delta M)\|_F\}_n$ is bounded by the discrete convolution of $\{a_j\}_j$ and $\{m_r\}_r$. Using Young's convolution inequality gives
\begin{align}
\left(\sum_{n=0}^{2H-1} \|C_n(\Delta M)\|_F^2 \right)^{1/2}&\le\left(\sum_{j=0}^H a_j\right)\left(\sum_{r=0}^{H-1}m_r^2\right)^{1/2}\notag\\
&=\left(\sum_{j=0}^H\|A_j\|\right)\|\Delta M\|_F.
\label{eq:young-convolution}
\end{align}
Using \eqref{eq:Aj-bound}, we have
\begin{align}
\sum_{j=0}^H\|A_j\|&\le\kappa^2\kappa_B\sum_{j=0}^H(1-\gamma)^j\notag\\
&\le\kappa^2\kappa_B \sum_{j=0}^\infty(1-\gamma)^j
=\frac{\kappa^2\kappa_B}{\gamma}.
\label{eq:geometric-gain}
\end{align}
Combining
\eqref{eq:delta-y-expectation}, \eqref{eq:young-convolution} and \eqref{eq:geometric-gain},
we obtain
\begin{equation}\label{eq:state-gain}
\mathbb E\|\Delta y_{t+1}\|^2\le C_y^2\|\Delta M\|_F^2,
\end{equation}
where $C_y:=\frac{\overline{\sigma}_w\kappa^2\kappa_B}{\gamma}$.

We next bound the variation of the truncated control input. By its definition,
\begin{align}
\Delta v_{t+1}&:=v_{t+1}^K(M)-v_{t+1}^K(N)\notag\\
&=-K\Delta y_{t+1}+\sum_{r=0}^{H-1}\Delta M^{[r]}w_{t-r}.
\label{eq:delta-v}
\end{align}
Let $d_{t+1}(\Delta M):=\sum_{r=0}^{H-1} \Delta M^{[r]}w_{t-r}$.
The independence and zero-mean disturbances also imply
\begin{align}
\mathbb E\|d_{t+1}(\Delta M)\|^2&=\sum_{r=0}^{H-1}\operatorname{tr} \left(\Delta M^{[r]}\Sigma_w \Delta M^{[r]\top} \right)\notag\\
&\le \overline{\sigma}_w^2 \sum_{r=0}^{H-1} \|\Delta M^{[r]}\|_F^2 \notag\\
&\leq \overline{\sigma}_w^2 \|\Delta M\|_F^2.
\label{eq:direct-control-gain}
\end{align}

Applying Minkowski's inequality in the Hilbert space $L_2$ to \eqref{eq:delta-v}, and using $\|K\|\le\kappa$, \eqref{eq:state-gain}, and \eqref{eq:direct-control-gain}, yields
\begin{align}
&\left( \mathbb E\|\Delta v_{t+1}\|^2 \right)^{1/2}\notag\\
\le& \|K\| \left( \mathbb E\|\Delta y_{t+1}\|^2 \right)^{1/2} +\left( \mathbb E\|d_{t+1}(\Delta M)\|^2 \right)^{1/2} \notag\\
\le& \left( \kappa C_y+\overline{\sigma}_w\right) \|\Delta M\|_F.
\end{align}
Define
\begin{equation}
\label{eq:Cv-definition}
C_v := \kappa C_y+\overline{\sigma}_w=\overline{\sigma}_w\left(1+\frac{\kappa^3\kappa_B}{\gamma}\right).
\end{equation}
It follows that
\begin{equation}
\label{eq:control-gain}
\mathbb E\|\Delta v_{t+1}\|^2\le C_v^2\|\Delta M\|_F^2.
\end{equation}

Combining \eqref{eq:state-gain} and
\eqref{eq:control-gain}, we obtain
\begin{align}
\mathbb E \left\|\begin{bmatrix}\Delta y_{t+1}\\\Delta v_{t+1}\end{bmatrix}\right\|^2
&=\mathbb E\|\Delta y_{t+1}\|^2+\mathbb E\|\Delta v_{t+1}\|^2\notag\\
&\le \left(C_y^2+C_v^2\right) \|\Delta M\|_F^2 \notag\\
&=C_{\mathrm{DAC}}^2 \|\Delta M\|_F^2,
\label{eq:joint-DAC-gain}
\end{align}
where
\begin{align}
C_{\mathrm{DAC}}^2 &:=C_y^2+C_v^2\notag\\
&=\overline{\sigma}_w^2\left[\left(\frac{\kappa^2\kappa_B}{\gamma}\right)^2+\left(1+\frac{\kappa^3\kappa_B}{\gamma}\right)^2\right].
\label{eq:CDAC-definition}
\end{align}

We now use \eqref{eq:joint-DAC-gain} to establish the smoothness of $\widetilde f_t$. For a fixed disturbance realization, define the affine mapping
$$
\mathcal Z_t(M;w):=\begin{bmatrix}
y_t^K(M;w)\\
v_t^K(M;w)
\end{bmatrix}=b_t(w)+\mathcal G_t(w)M,
$$
where $\mathcal G_t(w)$ is the linear part of the DAC mapping. The \eqref{eq:joint-DAC-gain} implies that, for every parameter direction $V$,
\begin{equation}
\label{eq:G-mean-square-bound}
\mathbb E\|\mathcal G_t(w)V\|^2\le C_{\mathrm{DAC}}^2\|V\|_F^2.
\end{equation}

Because the disturbance is bounded, $\mathcal M$ is compact, and the gradients of $c_t$ are bounded on the induced state-action domain, differentiation and expectation can be interchanged. Hence, we have
\begin{equation}
\label{eq:surrogate-gradient}
\nabla\widetilde f_t(M)=\mathbb E\left[\mathcal G_t(w)^* \nabla c_t\bigl(\mathcal Z_t(M;w)\bigr)\right],
\end{equation}
where $\mathcal G_t(w)^*$ denotes the adjoint operator of $\mathcal G_t(w)$ under the product Frobenius inner product.

\par Let $U$ be any parameter direction with $\|U\|_F=1$. Using \eqref{eq:surrogate-gradient}, the adjoint identity, and the $l$-smoothness of $c_t$, we have
\begin{align}
&\left|\left\langle U, \nabla\widetilde f_t(M) - \nabla\widetilde f_t(N) \right\rangle \right| \notag\\
&=\left|\mathbb E \left[ \left\langle \mathcal G_t(w)U, \nabla c_t\bigl(\mathcal Z_t(M;w)\bigr) - \nabla c_t\bigl(\mathcal Z_t(N;w)\bigr) \right\rangle\right]\right|\notag\\
&\le l \mathbb E \left[ \|\mathcal G_t(w)U\| \|\mathcal G_t(w)\Delta M\| \right].
\label{eq:gradient-difference-bound}
\end{align}
Applying the Cauchy-Schwarz inequality in expectation and using \eqref{eq:G-mean-square-bound}, we obtain
\begin{align}
&\mathbb E \left[ \|\mathcal G_t(w)U\| \|\mathcal G_t(w)\Delta M\|\right]\notag\\
&\le \left(\mathbb E\|\mathcal G_t(w)U\|^2\right)^{1/2}\left(\mathbb E\|\mathcal G_t(w)\Delta M\|^2\right)^{1/2}\notag\\
&\le C_{\mathrm{DAC}}^2 \|U\|_F \|\Delta M\|_F = C_{\mathrm{DAC}}^2 \|M-N\|_F.
\end{align}
Substituting this estimate into \eqref{eq:gradient-difference-bound} gives
$$ \left| \left\langle U, \nabla\widetilde f_t(M) - \nabla\widetilde f_t(N) \right\rangle \right| \le lC_{\mathrm{DAC}}^2 \|M-N\|_F.$$
Since $U$ is an arbitrary unit-norm direction, we have
$$\left\| \nabla\widetilde f_t(M) -\nabla\widetilde f_t(N) \right\|_F \le lC_{\mathrm{DAC}}^2 \|M-N\|_F.$$
Therefore, $\widetilde f_t$ is $L$-smooth with $L\le lC_{\mathrm{DAC}}^2$.

For the initial stages $t\le H$, the truncated state and control contain only a subset of the terms used above. Therefore, the same upper bound continues to hold.
\end{proof}
\par In section \ref{red_oco_sec}, we established a reduction from online control to online convex optimization with memory by adopting the DAC parameterization and introducing truncated loss functions. Building on this reduction, the next lemma quantifies the truncation error and characterizes how accurately the truncated objective approximates the original control cost. %relates the cost of $f_t\left(M_{t-H-1: t}\right)$ with the actual cost $c_t\left(x_t^K\left(M_{0: t-1}\right), u_t^K\left(M_{0: t}\right)\right)$. [Theorem 5.3, \cite{Agarwal2018OnlineControl}]
\begin{lemma}\cite[Theorem 5.3]{Agarwal2018OnlineControl}\label{trunc_approx_err}
	For any $(\kappa, \gamma)$-strongly stable $K$, under Assumptions \ref{dynamic_ass}, \ref{obj_ass} and \ref{DAC_ass}, we have that
	\begin{align*}
	&\left|\sum_{t=1}^T f_t\left(M_{t-H-1: t}\right)-\sum_{t=1}^T c_t\left(x_t^K\left(M_{0: t-1}\right), u_t^K\left(M_{0: t}\right)\right) \right|\\
	&\leq 2 T G D^2 \kappa^3(1-\gamma)^{H+1},
	\end{align*}
	where $D$ is defined in \eqref{D_def_eq}.
	%where
	% $$
	% D \triangleq \frac{W \kappa^3\left(1+H \kappa_B \tau\right)}{\gamma\left(1-\kappa^2(1-\gamma)^{H+1}\right)}+\frac{\tau W}{\gamma} .
	% $$
\end{lemma}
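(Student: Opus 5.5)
The plan follows the argument of \cite[Theorem 5.3]{Agarwal2018OnlineControl}. I would compare, at each time step $t$, the true state--action pair with its truncated counterpart, and bound the difference using the exponential decay of $\widetilde A_K$.

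First, apply the recurrence of the Proposition with $h=H$ and time index $t-1$. This gives $x_t^K(M_{0:t-1}) = \widetilde A_K^{H+1} x_{t-H-1} + \sum_{i=0}^{2H}\Psi_{t-1,i}^{K,H}(M_{t-H-1:t-1})w_{t-1-i}$. By Definition \ref{f_t_def}, the sum is exactly the truncated state $y_t^K(M_{t-H-1:t-1})$. Hence $x_t - y_t = \widetilde A_K^{H+1}x_{t-H-1}$. Both controls add the same disturbance-feedback term $\sum_i M_t^{[i-1]}w_{t-i}$, so $u_t - v_t = -K\widetilde A_K^{H+1}x_{t-H-1}$. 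By strong stability, $\|\widetilde A_K^{H+1}\|\le \kappa^2(1-\gamma)^{H+1}$ and $\|K\|\le\kappa$. Therefore both gaps are at most $\kappa^3(1-\gamma)^{H+1}\|x_{t-H-1}\|$.

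Second, establish uniform boundedness. I would show by induction that $\|x_t\|,\|y_t\|,\|u_t\|,\|v_t\|\le D$ for every $t$, with $D$ as in \eqref{D_def_eq}. The tools are the transfer-matrix bound $\|\Psi_{t,i}^{K,h}\|\le \kappa^2(1-\gamma)^i + H\kappa_B^2\kappa^5(1-\gamma)^{i}$, which uses the set $\mathcal M$ in \eqref{M_set_def}, together with $\|w_t\|\le\bar w$. Summing the geometric series gives a bound on $\|x_t\|$ of the form $\kappa^2(1-\gamma)^{H+1}\|x_{t-H-1}\| + \bar w\kappa^3(1+H\kappa_B^2\kappa^3)/\gamma$. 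Unrolling this across blocks of length $H+1$ yields the factor $(1-\kappa^2(1-\gamma)^{H+1})^{-1}$. The control adds the term $\kappa_B\kappa^3\bar w/\gamma$ coming from $\sum_i\|M^{[i-1]}\|\bar w$. This step is the main obstacle. One has to track constants carefully to recover exactly $D$, and the induction requires $\kappa^2(1-\gamma)^{H+1}<1$, which holds for the choice $H=\lceil 2\gamma^{-1}\log T\rceil$ once $T$ is moderately large.

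Third, apply the mean value theorem together with Assumption \ref{obj_ass}. Since all arguments have norm at most $D$, the gradients are bounded by $GD$. This gives $|c_t(x_t,u_t)-c_t(y_t,v_t)|\le GD(\|x_t-y_t\|+\|u_t-v_t\|)\le 2GD\cdot\kappa^3(1-\gamma)^{H+1}D$. Pathwise this equals $2GD^2\kappa^3(1-\gamma)^{H+1}$, and taking expectations preserves the bound, so the same estimate holds between $f_t$ and $\mathbb E\, c_t$. For $t\le H$ there is no truncation, because $x_0$ can be taken as $0$ and all $w_i=0$ for $i<0$, so the difference is zero. Finally, summing over $t=1,\dots,T$ with the triangle inequality gives the stated bound $2TGD^2\kappa^3(1-\gamma)^{H+1}$.
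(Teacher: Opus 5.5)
Your proposal is correct and is essentially the argument behind the cited \cite[Theorem 5.3]{Agarwal2018OnlineControl}, which the paper invokes without reproducing: the identities $x_t-y_t=\widetilde A_K^{H+1}x_{t-H-1}$ and $u_t-v_t=-K\widetilde A_K^{H+1}x_{t-H-1}$, the uniform bound by $D$, and the gradient bound on the $D$-ball, with your pathwise estimate passed through the expectation to give the $\mathbb{E}\,c_t$ form that Theorem~\ref{main_thm} actually uses for $A_T$ and $C_T$. One small constant fix: summing $\|\Psi_{t,i}^{K,H}\|\le\kappa^2(1+H\kappa_B^2\kappa^3)(1-\gamma)^i$ gives a state bound with $\kappa^2$ rather than $\kappa^3$ in the numerator, and it is then $\|u_t\|\le\kappa\|x_t\|+\kappa_B\kappa^3\bar w/\gamma$ that lands exactly on $D$.
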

%     Let the dynamical system satisfy Assumption 2.1 and let $K$ be any $(\kappa, \gamma)$-diagonal strongly stable matrix. Consider a sequence of loss functions $c_t(x, u)$ satisfying Assumption 2.2 and a sequence of policies $M_0 \ldots M_T$ satisfying

% $$
% \text { PolicyRegret }=\sum_{t=0}^T f_t\left(M_{t-H-1: t}\right)-\min _{M \in \mathcal{M}} \sum_{t=0}^T f_t(M) \leq R(T)
% $$
% for some function $R(T)$ and $f_t$ as defined in Definition 3.4. Let $A$ be an online algorithm that plays the non-stationary controller sequence $\left\{M_0, \ldots M_T\right\}$. Then as long as $H$ is chosen to be larger than $\gamma^{-1} \log \left(T \kappa^2\right)$ we have that
%\subsection{Convergence Analysis of Receding Horizon Optimization}
Building on the truncation-approximation result above, we now analyze the optimization dynamics of the proposed POC algorithm. Specifically, {POC adopts a receding-horizon scheme over the lookahead window, with objective $g_T^{RH}(\mathbf{M})=\sum_{t=1}^{T} \tilde{f}_{t}(M_t)+\sum_{t=1}^{T}\lambda  \|M_{t}-M_{t-1}\|_F$.} This objective consists of strongly convex stage losses and a non-smooth switching-cost regularizer, which together capture both performance and policy variation. To analyze the convergence properties, we first establish a linear convergence guarantee for the constrained proximal-gradient updates used within the horizon optimization, and then bound the initialization error induced by the lookahead-based OGD step. These two ingredients are subsequently combined to derive the main convergence result. 

{
We first establish a convergence guarantee for the receding-horizon objective used in the POC inner loop. For each stage $t$, the algorithm considers the finite window
\[
\mathbf M_{t,W}=(M_t,\ldots,M_{t+W-1}),
\]
and performs $W$ proximal-gradient steps on the receding-horizon objective
\begin{align}\label{g_tw_rh}
g_{t,W}^{RH}(\mathbf M_{t,W})	\triangleq F_t(\mathbf M_{t,W})+R_t(\mathbf M_{t,W}),
\end{align}
with $F_t$ and $R_t$ in \eqref{F_R_Def_alg}.
% where
% \begin{align*}
% F_t(\mathbf M_{t,W})&\triangleq \sum_{s=t}^{t+W-1}\tilde f_s(M_s),\\
% R_t(\mathbf M_{t,W})&\triangleq \lambda\sum_{s=t}^{t+W-1}\|M_s-M_{s-1}\|_F
% \\
% &+\lambda\|M_{t+W}^{bd}-M_{t+W-1}\|_F.
% \end{align*}
% Here $M_{t-1}$ and $M_{t+W}^{bd}\triangleq M_{t+W}(0)$ are fixed boundary terms. The update used in Algorithm \ref{POC_alg} is
% \begin{align}
% &\mathbf M_{t,W}(k)
% =\arg\min_{\mathbf Z\in\mathcal M^{W}}\bigg\{
% R_t(\mathbf Z)
% \notag\\
% &+\frac{1}{2\eta}\Big\|\mathbf{Z}-\big(\mathbf M_{t,W}(k-1)-\eta\nabla F_t(\mathbf M_{t,W}(k-1))\big)\Big\|_F^2
% \bigg\},
% \end{align}
% which is exactly the proximal-gradient step for the composite problem $g_{t,W}^{RH}=F_t+R_t$ over the convex feasible set $\mathcal M^{W}$.

\begin{proposition}[Linear Convergence of Proximal Gradient]
\label{prop3}
Under Assumptions \ref{dynamic_ass}, \ref{obj_ass}, and \ref{DAC_ass}, let the proximal step-size satisfy $0 <\eta \le \frac{1}{L}$. Consider the windowed receding-horizon objective function $g_{t,W}^{RH}$ in \eqref{g_tw_rh} defined over the closed convex set $\mathcal{M}^W$. % where $F_t$ is $L$-smooth and $\mu$-strongly convex, and $R_t$ is proper, closed, and convex (potentially non-smooth). 
Let $\mathbf{M}_{t,W}^* \in \mathcal{M}^W$ be the unique global minimizer of $g_{t,W}^{RH}$ over $\mathcal{M}^W$. Then, the sequence of iterates $\{\mathbf{M}_{t,W}(k)\}_{k \ge 0}$ generated by the proximal gradient update  satisfies
\begin{equation}
\|\mathbf{M}_{t,W}(k) - \mathbf{M}_{t,W}^*\|_F \le \rho^k \|\mathbf{M}_{t,W}(0) - \mathbf{M}_{t,W}^*\|_F,
\end{equation}
where the contraction factor is given by $\rho = \sqrt{1 - \mu\eta} < 1$.
\end{proposition}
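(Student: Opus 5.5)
The plan is the standard argument for linear convergence of the proximal-gradient method on a composite problem $F+R$, with $F$ smooth and strongly convex. We first collect the structural facts. By Lemma \ref{f_t_lips}, each $\tilde f_s$ is $\mu$-strongly convex. By Lemma \ref{lem:surrogate-smoothness}, each $\tilde f_s$ is $L$-smooth. Since $F_t(\mathbf M_{t,W})=\sum_{s=t}^{t+W-1}\tilde f_s(M_s)$ is block separable, its gradient is the stacked block gradients. Hence $F_t$ is $\mu$-strongly convex and $L$-smooth in the product Frobenius norm on $\mathcal M^W$, with the same constants.

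Next, $R_t$ is a finite sum of norms of affine maps of $\mathbf Z$, where the boundary points $M_{t-1}$ and $M_{t+W}(0)$ are fixed. So $R_t$ is convex, continuous, and possibly non-smooth. The set $\mathcal M^W$ is a product of spectral-norm balls, hence closed, convex and compact. We fold the constraint into the regularizer through its indicator, setting $\bar R_t:=R_t+\iota_{\mathcal M^W}$, which is proper, closed and convex. The update \eqref{inn_prox_up} is then exactly
\[
\mathbf M_{t,W}(k)=\operatorname{prox}_{\eta\bar R_t}\big(\mathbf M_{t,W}(k-1)-\eta\nabla F_t(\mathbf M_{t,W}(k-1))\big).
\]
Existence and uniqueness of the minimizer $\mathbf M^*_{t,W}$ follow from strong convexity and compactness. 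Its optimality condition $0\in\nabla F_t(\mathbf M^*_{t,W})+\partial\bar R_t(\mathbf M^*_{t,W})$ is equivalent to the fixed-point identity
\[
\mathbf M^*_{t,W}=\operatorname{prox}_{\eta\bar R_t}\big(\mathbf M^*_{t,W}-\eta\nabla F_t(\mathbf M^*_{t,W})\big).
\]

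The prox map is firmly nonexpansive and in particular $1$-Lipschitz, so
\[
\|\mathbf M_{t,W}(k)-\mathbf M^*_{t,W}\|_F\le\|T(\mathbf M_{t,W}(k-1))-T(\mathbf M^*_{t,W})\|_F,
\]
where $T(\mathbf X):=\mathbf X-\eta\nabla F_t(\mathbf X)$. It remains to show $\|T(\mathbf X)-T(\mathbf Y)\|_F^2\le(1-\mu\eta)\|\mathbf X-\mathbf Y\|_F^2$. Write $\Delta:=\mathbf X-\mathbf Y$ and $g:=\nabla F_t(\mathbf X)-\nabla F_t(\mathbf Y)$, and expand:
\[
\|\Delta-\eta g\|^2=\|\Delta\|^2-2\eta\langle g,\Delta\rangle+\eta^2\|g\|^2 .
\]
For an $L$-smooth convex function, co-coercivity gives $\|g\|^2\le L\langle g,\Delta\rangle$. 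With $\eta\le 1/L$ this yields $\eta^2\|g\|^2\le\eta\langle g,\Delta\rangle$, so $\|\Delta-\eta g\|^2\le\|\Delta\|^2-\eta\langle g,\Delta\rangle$. Strong convexity gives $\langle g,\Delta\rangle\ge\mu\|\Delta\|^2$, and the contraction with factor $1-\mu\eta$ follows. Co-coercivity needs $F_t$ convex and smooth on the whole space, or at least on a convex set containing the iterates. If it is only established on $\mathcal M^W$, we can still use the local version, since all points involved lie in $\mathcal M^W$. Alternatively, we can use the standard sharper inequality $\langle g,\Delta\rangle\ge\frac{\mu L}{\mu+L}\|\Delta\|^2+\frac1{\mu+L}\|g\|^2$, which gives a factor at least as good.

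Iterating the one-step bound $k$ times gives $\|\mathbf M_{t,W}(k)-\mathbf M^*_{t,W}\|_F\le\rho^k\|\mathbf M_{t,W}(0)-\mathbf M^*_{t,W}\|_F$ with $\rho=\sqrt{1-\mu\eta}$. Finally, $\mu\le L$ and $\eta\le 1/L$ give $\mu\eta\in(0,1]$, so $\rho<1$ whenever $\mu\eta<1$; the case $\mu\eta=1$ gives $\rho=0$, which trivially satisfies the bound.

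The main obstacle is the co-coercivity step. The smoothness in Lemma \ref{lem:surrogate-smoothness} is stated over $\mathcal M$, and the smoothness of $c_t$ holds globally, so we will note that the same argument extends $L$-smoothness to all of the parameter space. We will also check that the expectation-based convexity makes co-coercivity applicable there.
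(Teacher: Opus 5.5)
Your proposal is correct and follows essentially the same route as the paper: absorb the constraint into the regularizer through an indicator, use the fixed-point characterization of $\mathbf{M}_{t,W}^*$ and the nonexpansiveness of the prox, show that the gradient map contracts by $\sqrt{1-\mu\eta}$, and iterate. The only difference is cosmetic: you get the contraction from plain co-coercivity $\|g\|_F^2\le L\langle g,\Delta\rangle$ together with strong monotonicity, whereas the paper uses the combined inequality with constants $\frac{\mu L}{\mu+L}$ and $\frac{1}{\mu+L}$ and then drops the gradient term because $\eta\le\frac{2}{\mu+L}$, which is the alternative you also mention.
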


\begin{proof}
For presentation conciseness, we drop the indices $t$ and $W$, denoting $\mathbf{M}_{t,W}(k)$ as $\mathbf{M}(k)$, $\mathbf{M}_{t,W}^*$ as $\mathbf{M}^*$, $F_t$ as $F$, and $R_t$ as $R$. To capture the constraint set $\mathcal{M}^W$, we incorporate the indicator function $\mathbb{I}_{\mathcal{M}^W}(\mathbf{M})$ into the non-smooth term by defining $h(\mathbf{M}) \triangleq R(\mathbf{M}) + \mathbb{I}_{\mathcal{M}^W}(\mathbf{M})$. Since $\mathcal{M}^W$ is closed and convex, $h(\mathbf{M})$ remains a proper, closed, and convex function.
\par Under this formulation, the proximal gradient update step \eqref{inn_prox_up} can be expressed via the composite proximal mapping operator $\mathcal{T}_\eta: \mathcal{M}^W \rightarrow \mathcal{M}^W$:
\begin{align*}
\mathbf{M}(k) = & \mathcal{T}_\eta(\mathbf{M}(k-1)) \\
\triangleq & \text{prox}_{\eta h} \left( \mathbf{M}(k-1) - \eta \nabla F(\mathbf{M}(k-1)) \right),
\end{align*}
where the proximal operator is defined as $\text{prox}_{\eta h}(\mathbf{Z}) \triangleq \arg\min_{\mathbf{M}} \{ h(\mathbf{M}) + \frac{1}{2\eta} \|\mathbf{M} - \mathbf{Z}\|_F^2 \}$. Since $\mathbf{M}^*$ is the unique optimal solution over the feasible set, it constitutes a fixed point of the operator $\mathcal{T}_\eta$, satisfying
\begin{equation}
\mathbf{M}^* = \mathcal{T}_\eta(\mathbf{M}^*) = \text{prox}_{\eta h} \left( \mathbf{M}^* - \eta \nabla F(\mathbf{M}^*) \right).
\end{equation}
% By the fundamental property of proximal mappings for any proper closed convex function, the operator $\text{prox}_{\eta h}$ is non-expansive. 
Applying the non-expansiveness property of proximal mapping operators yields
\begin{align}
&\|\mathbf{M}(k) - \mathbf{M}^*\|_F \notag\\
=& \|\text{prox}_{\eta h} \left( \mathbf{M}(k-1) - \eta \nabla F(\mathbf{M}(k-1)) \right) \notag\\
&\quad - \text{prox}_{\eta h} \left( \mathbf{M}^* - \eta \nabla F(\mathbf{M}^*) \right)\|_F \notag \\
\le& \|\left( \mathbf{M}(k-1) \!- \eta \nabla F(\mathbf{M}(k-1)) \right)\! -\! \left( \mathbf{M}^* \!- \eta \nabla F(\mathbf{M}^*) \right)\|_F. \label{eq:prox_non_expansive}
\end{align}
\par Next, we analyze the contraction property of the gradient descent step on the smooth part. Let $\mathbf{\Delta} \triangleq \mathbf{M}(k-1) - \mathbf{M}^*$. Expanding the squared Frobenius norm of the right-hand side of \eqref{eq:prox_non_expansive} gives
\begin{align}
& \|\left( \mathbf{M}(k-1) - \eta \nabla F(\mathbf{M}(k-1)) \right) - \left( \mathbf{M}^* - \eta \nabla F(\mathbf{M}^*) \right)\|_F^2 \nonumber \\
=& \|\mathbf{\Delta}\|_F^2 - 2\eta \langle \mathbf{\Delta}, \nabla F(\mathbf{M}(k-1)) - \nabla F(\mathbf{M}^*) \rangle\notag\\
&+ \eta^2 \|\nabla F(\mathbf{M}(k-1)) - \nabla F(\mathbf{M}^*)\|_F^2. \label{eq:squared_expansion}
\end{align}
Since $F$ is $L$-smooth and $\mu$-strongly convex, its gradient satisfies the standard co-coercivity property
\begin{align*}
&\langle \mathbf{\Delta}, \nabla F(\mathbf{M}(k-1)) - \nabla F(\mathbf{M}^*) \rangle \\
\ge& \frac{\mu L}{\mu + L} \|\mathbf{\Delta}\|_F^2 + \frac{1}{\mu + L} \|\nabla F(\mathbf{M}(k-1)) - \nabla F(\mathbf{M}^*)\|_F^2.
\end{align*}
Substituting this bound into \eqref{eq:squared_expansion} yields
\begin{align}
& \|( \mathbf{M}(k-1) \!- \eta \nabla F(\mathbf{M}(k-1)) ) - ( \mathbf{M}^* \! - \eta \nabla F(\mathbf{M}^*) )\|_F^2 \nonumber \\
\le& \left( 1 - \frac{2\eta \mu L}{\mu + L} \right) \|\mathbf{\Delta}\|_F^2 \notag\\
&+ \eta \left( \eta - \frac{2}{\mu + L} \right) \|\nabla F(\mathbf{M}(k-1)) - \nabla F(\mathbf{M}^*)\|_F^2. \label{co_coercive_substituted}
\end{align}
Since $\eta \le 1/L$ and  $L \ge \mu > 0$, we have $\eta \le \frac{1}{L} \le \frac{2}{\mu + L}$, which directly guarantees that the coefficient of the second term is non-positive, i.e., $\eta ( \eta - \frac{2}{\mu + L} ) \le 0$. Dropping this non-positive term simplifies the bound to
\begin{align}\label{sqr_up_fi}
&\|\left( \mathbf{M}(k-1) - \eta \nabla F(\mathbf{M}(k-1)) \right) - \left( \mathbf{M}^* - \eta \nabla F(\mathbf{M}^*) \right)\|_F^2 \notag \\
\le &\left( 1 - \frac{2\eta \mu L}{\mu + L} \right) \|\mathbf{\Delta}\|_F^2.
\end{align}
Furthermore, due to $L \ge \mu>0$, it implies $\frac{L}{\mu + L} \ge \frac{1}{2}$ and $\frac{2\mu L}{\mu + L} \ge \mu$. Applying this relationship to the contraction coefficient yields
\begin{equation}
1 - \frac{2\eta \mu L}{\mu + L} \le 1 - \mu \eta.
\end{equation}
Taking the square root on both sides of \eqref{sqr_up_fi} and substituting it into \eqref{eq:prox_non_expansive} lead to 
\begin{align*}
\|\mathbf{M}(k) - \mathbf{M}^*\|_F \le& \sqrt{1 - \mu \eta} \|\mathbf{M}(k-1) - \mathbf{M}^*\|_F \\
=& \rho \|\mathbf{M}(k-1) - \mathbf{M}^*\|_F.
\end{align*}
Applying it recursively for $k$ iterations completes the proof.
% \begin{equation}
% \label{eq:final_prop_bound}
% \|\mathbf{M}(k) - \mathbf{M}^*\|_F \le \rho^k \|\mathbf{M}(0) - \mathbf{M}^*\|_F.
% \end{equation}
\end{proof}}

To further exploit the lookahead information available at time step $t$, we next analyze the online gradient descent update used to initialize the POC algorithm. The following proposition establishes a regret bound for the OGD update with respect to the objective $g_T^{RH}$. 

\begin{proposition}[Dynamic Regret of the OGD Initialization]
\label{proposition:ogd_init}
Under Assumptions \ref{dynamic_ass}, \ref{obj_ass}, and \ref{DAC_ass}, let the decision set $\mathcal{M}$ have a bounded diameter $D_M = \sup_{M, N \in \mathcal{M}} \|M - N\|_F$. The regret of OGD update \eqref{OGD_up} with stepsize $\delta = \frac{1}{L}$ against the {global comparator sequence} $\mathbf{M}_{RH}^*$ satisfies
\begin{equation}
g_T^{RH}(\mathbf{M}(0)) - g_T^{RH}(\mathbf{M}_{RH}^*) \le B_{OGD},
\end{equation}
where the bound $B_{OGD}$ is given by
\begin{equation}
B_{OGD} = \frac{D_M^2}{2\delta} + \delta T G_f \left( \frac{G_f}{2} + \lambda \right) + \frac{D_M}{\delta} P_T^{RH},
\end{equation}
and $P_T^{RH} = \sum_{t=1}^T \|M_{t,RH}^* - M_{t-1,RH}^*\|_F$ is the path-length of the global comparator sequence.
\end{proposition}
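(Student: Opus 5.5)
We will run the standard dynamic-regret analysis of projected online gradient descent (Zinkevich's argument) on the stage losses, and treat the switching term of $g_T^{RH}$ separately. Throughout, $\mathbf{M}(0)=(M_1(0),\dots,M_T(0))$ denotes the OGD warm-start sequence generated by \eqref{OGD_up}, and the comparator is $M_{t,RH}^*$.

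First split the gap as
\[
g_T^{RH}(\mathbf{M}(0)) - g_T^{RH}(\mathbf{M}_{RH}^*) = \sum_{t=1}^T \bigl[\tilde f_t(M_t(0)) - \tilde f_t(M_{t,RH}^*)\bigr] + \lambda\sum_{t=1}^T\bigl[\|M_t(0)-M_{t-1}(0)\|_F - \|M_{t,RH}^*-M_{t-1,RH}^*\|_F\bigr].
\]
We simply drop the negative comparator switching term. For the algorithm's switching term we use non-expansiveness of $\Pi_{\mathcal{M}}$. Since $M_{t-1}(0)\in\mathcal{M}$, this gives $\|M_t(0)-M_{t-1}(0)\|_F\le \delta\|\nabla\tilde f_{t-1}(M_{t-1}(0))\|_F\le \delta G_f$ by Lemma \ref{f_t_lips}. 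Summing contributes at most $\lambda\delta T G_f$, which is exactly the $\delta T G_f\lambda$ part of $B_{OGD}$.

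For the stage-loss part, use convexity of $\tilde f_t$ (Lemma \ref{f_t_lips}; strong convexity is not needed) to get
\[
\tilde f_t(M_t(0))-\tilde f_t(M_{t,RH}^*)\le \langle \nabla\tilde f_t(M_t(0)),M_t(0)-M_{t,RH}^*\rangle .
\]
Then apply the projection inequality to the update $M_{t+1}(0)=\Pi_{\mathcal M}[M_t(0)-\delta\nabla\tilde f_t(M_t(0))]$:
\[
\|M_{t+1}(0)-M_{t,RH}^*\|_F^2\le \|M_t(0)-M_{t,RH}^*\|_F^2-2\delta\langle\nabla\tilde f_t,M_t(0)-M_{t,RH}^*\rangle+\delta^2G_f^2 .
\]
This bounds each inner product by
\[
\frac{1}{2\delta}\bigl(\|M_t(0)-M_{t,RH}^*\|_F^2-\|M_{t+1}(0)-M_{t,RH}^*\|_F^2\bigr)+\frac{\delta}{2}G_f^2 .
\]
Summing the last term over $t$ gives $\frac{\delta}{2}TG_f^2$.

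The remaining step, and the only genuinely dynamic point, is that the telescoping breaks because the comparator moves. We rewrite
\begin{align*}
&\|M_{t+1}(0)-M_{t,RH}^*\|_F^2\\
&= \|M_{t+1}(0)-M_{t+1,RH}^*\|_F^2 + \bigl(\|M_{t+1}(0)-M_{t,RH}^*\|_F^2-\|M_{t+1}(0)-M_{t+1,RH}^*\|_F^2\bigr).
\end{align*}
The bracket is a difference of squares, $(a-b)(a+b)$. Its first factor is bounded by $\|M_{t+1,RH}^*-M_{t,RH}^*\|_F$ via the triangle inequality. Its second factor is at most $2D_M$, since all points lie in $\mathcal{M}$. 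After dividing by $2\delta$, each bracket contributes at most $\frac{D_M}{\delta}\|M_{t+1,RH}^*-M_{t,RH}^*\|_F$.

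Summing over $t$, the main terms telescope to at most $\frac{1}{2\delta}\|M_1(0)-M_{1,RH}^*\|_F^2\le \frac{D_M^2}{2\delta}$, after discarding the final nonpositive term. The drift terms sum to at most $\frac{D_M}{\delta}P_T^{RH}$. The reindexing is harmless: any boundary mismatch between $\sum_{t=1}^{T-1}\|M_{t+1,RH}^*-M_{t,RH}^*\|$ and the stated definition of $P_T^{RH}$ only makes the bound looser, since every term is nonnegative.

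Collecting the three contributions yields $B_{OGD}$. The choice $\delta=1/L$ plays no role beyond fixing the constants, so the argument holds for any $\delta>0$. Care is needed at the boundary $t=1$, where $M_0=M_1(0)$, so the first switching term vanishes. One should also note that the initialization loop and the in-loop update \eqref{OGD_up} together generate one single OGD trajectory.
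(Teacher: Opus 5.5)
Your proposal is correct and follows essentially the same route as the paper. The paper uses the same split of $g_T^{RH}$ into stage-loss and switching parts, the same bound $\|M_t(0)-M_{t-1}(0)\|_F\le\delta G_f$ from non-expansiveness of $\Pi_{\mathcal M}$, and the same projected-OGD inequality with the difference-of-squares drift bound $2D_M\|M_{t,RH}^*-M_{t-1,RH}^*\|_F$. The only difference is cosmetic: you use plain convexity where the paper invokes $\mu$-strong convexity, but the paper then discards the $\mu$-term via $1-\mu\delta\le 1$, so nothing is lost.
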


\begin{proof}
For notational conciseness, we denote the OGD iterates $\mathbf{M}(0)$ as $M_t$ and the global comparator sequence $M_{t,RH}^*$ as $M_t^*$ throughout this proof. 
The regret of the OGD sequence evaluated on the regularized objective is given by
\begin{align}
\label{eq:ogd_decomp}
&g_T^{RH}(\mathbf{M}(0)) - g_T^{RH}(\mathbf{M}_{RH}^*) \notag\\
= &\sum_{t=1}^T \left[ \tilde{f}_t(M_t) - \tilde{f}_t(M_t^*) \right] + \lambda \sum_{t=1}^T \|M_t - M_{t-1}\|_F \notag\\
&- \lambda \sum_{t=1}^T \|M_t^* - M_{t-1}^*\|_F.
\end{align}

We first bound the stage cost difference. The OGD update is given by $M_{t+1} = \Pi_{\mathcal{M}} \left[ M_t - \delta \nabla \tilde{f}_t(M_t) \right]$. By the non-expansiveness of the projection operator and the standard expansion of the Euclidean norm, for any sequence $M_t^*$, we have
\begin{align}
\|M_{t+1} - M_t^*\|_F^2 \le& \|M_t - \delta \nabla \tilde{f}_t(M_t) - M_t^*\|_F^2 \nonumber \\
=& \|M_t - M_t^*\|_F^2 \!- 2\delta \langle \nabla \tilde{f}_t(M_t), M_t \!- M_t^* \rangle\notag\\
&+ \delta^2 \|\nabla \tilde{f}_t(M_t)\|_F^2.
\end{align}
Since $\tilde{f}_t$ is $\mu$-strongly convex, it satisfies the lower bound $\langle \nabla \tilde{f}_t(M_t), M_t - M_t^* \rangle \ge \tilde{f}_t(M_t) - \tilde{f}_t(M_t^*) + \frac{\mu}{2}\|M_t - M_t^*\|_F^2$. Substituting this into the above inequality and rearranging the terms, we bound the single-stage cost difference as
\begin{align*}
&\tilde{f}_t(M_t) - \tilde{f}_t(M_t^*) \\
\le &\frac{1-\mu\delta}{2\delta} \|M_t - M_t^*\|_F^2 - \frac{1}{2\delta} \|M_{t+1} - M_t^*\|_F^2 + \frac{\delta}{2} G_f^2,
\end{align*}
where we use the gradient bound $\|\nabla \tilde{f}_t(M_t)\|_F \le G_f$ from Lemma \ref{f_t_lips}. Summing this inequality over $t=1, \dots, T$ yields
\begin{align*}
&\sum_{t=1}^T \left[ \tilde{f}_t(M_t) - \tilde{f}_t(M_t^*) \right] \\
\le &\frac{1}{2\delta} \sum_{t=1}^T \left( \|M_t - M_t^*\|_F^2 - \|M_{t+1} - M_t^*\|_F^2 \right) + \frac{\delta T}{2} G_f^2 \nonumber \\
\le &\frac{1}{2\delta} \|M_1 - M_1^*\|_F^2 + \frac{\delta T}{2} G_f^2\\
&+ \frac{1}{2\delta} \sum_{t=2}^T \left( \|M_t - M_t^*\|_F^2 - \|M_t - M_{t-1}^*\|_F^2 \right).
\end{align*}
To handle the telescoping term, we apply the difference of squares and the triangle inequality $\|M_t - M_{t-1}^*\|_F \le \|M_t - M_t^*\|_F + \|M_t^* - M_{t-1}^*\|_F$, obtaining
\begin{align}
&\|M_t - M_t^*\|_F^2 - \|M_t - M_{t-1}^*\|_F^2 \notag\\
= &\left( \|M_t - M_t^*\|_F - \|M_t - M_{t-1}^*\|_F \right) \times\notag\\
&\quad \left( \|M_t - M_t^*\|_F + \|M_t - M_{t-1}^*\|_F \right) \notag \\
\le &\|M_t^* - M_{t-1}^*\|_F \cdot (2D_M),
\end{align}
where $D_M$ is the diameter of the decision set $\mathcal{M}$. Therefore, the sum of the stage cost differences is bounded by:
\begin{align}
\label{eq:ogd_stage_bound}
&\sum_{t=1}^T \left[ \tilde{f}_t(M_t) - \tilde{f}_t(M_t^*) \right] \notag\\
\le &\frac{D_M^2}{2\delta} + \frac{D_M}{\delta} \sum_{t=2}^T \|M_t^* - M_{t-1}^*\|_F + \frac{\delta T}{2} G_f^2 \notag\\
\le &\frac{D_M^2}{2\delta} + \frac{D_M}{\delta} P_T^{RH} + \frac{\delta T}{2} G_f^2.
\end{align}

Next, we bound the switching cost of the OGD sequence, $\sum_{t=1}^T \|M_t - M_{t-1}\|_F$. For $t=2,\cdots,T$, the non-expansiveness of the projection operator and the gradient bound imply
\begin{align*}
&\|M_t - M_{t-1}\|_F \\
= &\left\| \Pi_{\mathcal{M}} \left[ M_{t-1} - \delta \nabla \tilde{f}_{t-1}(M_{t-1}) \right] - \Pi_{\mathcal{M}} \left[ M_{t-1} \right] \right\|_F \\
\le& \delta \|\nabla \tilde{f}_{t-1}(M_{t-1})\|_F \le \delta G_f.
\end{align*}
Together with $\|M_1(0)-M_0\|_F=0$, we obtain
\begin{equation}
\label{eq:ogd_switch_bound}
\lambda \sum_{t=1}^T \|M_t - M_{t-1}\|_F \le \lambda \delta T G_f.
\end{equation}

Finally, substituting the stage cost bound \eqref{eq:ogd_stage_bound} and the switching cost bound \eqref{eq:ogd_switch_bound} back into the regret decomposition \eqref{eq:ogd_decomp}, and dropping the non-positive term $-\lambda \sum \|M_t^* - M_{t-1}^*\|_F$, we obtain
\begin{align*}
&g_T^{RH}(\mathbf{M}(0)) - g_T^{RH}(\mathbf{M}_{RH}^*)\\
\le& \frac{D_M^2}{2\delta} + \frac{D_M}{\delta} P_T^{RH} + \frac{\delta T}{2} G_f^2 + \lambda \delta T G_f.
\end{align*}
Grouping the constants yields the desired bound $B_{OGD}$, completing the proof.
\end{proof}

Next, we establish the following bound on the suboptimality of the regularized objective $g_T^{RH}$. This result quantifies the combined effect of the proximal-gradient updates and the OGD-based initialization, and will serve as a key ingredient in the dynamic policy regret analysis. { We define the local-to-global mismatch error as $\mathcal{E}_{T, W}:=\sum_{t=1}^T\left\|M_{t, W}^{*(1)}-M_{t, R H}^*\right\|_F $ and $C_g:=G_f+2\lambda$.
}
{
\begin{lemma}\label{gRH_lem}
Under Assumptions \ref{dynamic_ass}, \ref{obj_ass} and \ref{DAC_ass}, let the step-sizes be $\delta = \frac{1}{L}$ and $0 < \eta \le \frac{1}{L}$. Let $\mathbf{M} = (M_1, \dots, M_T)$ be the executed policy sequence generated by the POC algorithm, and $\mathbf{M}_{RH}^* = (M_{1,RH}^*, \dots, M_{T,RH}^*)$ be the global minimizer of $g_T^{RH}$. Then, the suboptimality of the regularized objective satisfies
\begin{align*}
    &g_T^{RH}(\mathbf{M}) - g_T^{RH}(\mathbf{M}_{RH}^*) \\
    \le& C_g\mathcal{O} \left( \rho^W \sqrt{W T  B_{OGD}}+ \rho^W T\sqrt{W} + \mathcal{E}_{T, W} \right)
\end{align*}
where $\rho = \sqrt{1-\mu\eta} < 1$, and $B_{OGD}$ is the upper bound of the OGD initialization from Proposition \ref{proposition:ogd_init}.
\end{lemma}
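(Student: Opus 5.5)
The plan is to show that $g_T^{RH}$ is Lipschitz in each block of its argument, and then bound how far each executed block $M_t$ lies from the global minimizer block $M_{t,RH}^*$.

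\textbf{Step 1: Lipschitz reduction.} Since $\|\nabla\tilde f_t\|_F\le G_f$ by Lemma \ref{f_t_lips}, changing the $t$-th block $M_t$ by $\Delta$ changes the stage cost $\tilde f_t$ by at most $G_f\|\Delta\|_F$. By the triangle inequality it changes the two regularizer terms $\lambda\|M_t-M_{t-1}\|_F$ and $\lambda\|M_{t+1}-M_t\|_F$ by at most $2\lambda\|\Delta\|_F$. Summing over blocks gives
\begin{align*}
g_T^{RH}(\mathbf M)-g_T^{RH}(\mathbf M_{RH}^*)\le C_g\sum_{t=1}^T\|M_t-M_{t,RH}^*\|_F .
\end{align*}
For each $t$ we then split, using the first block $M_{t,W}^{*(1)}$ of the window minimizer $\mathbf M_{t,W}^*$,
\begin{align*}
\|M_t-M_{t,RH}^*\|_F\le\|M_t-M_{t,W}^{*(1)}\|_F+\|M_{t,W}^{*(1)}-M_{t,RH}^*\|_F .
\end{align*}
Summed over $t$, the second term is exactly $\mathcal E_{T,W}$.

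\textbf{Step 2: contraction of the window iterates.} Since $M_t=[\mathbf M_{t,W}(W)]^{(1)}$, Proposition \ref{prop3} gives
\begin{align*}
\|M_t-M_{t,W}^{*(1)}\|_F\le\rho^W\|\mathbf M_{t,W}(0)-\mathbf M_{t,W}^*\|_F .
\end{align*}
We bound the initialization error by inserting the global minimizer restricted to the window, $\mathbf M^*_{RH,[t,t+W-1]}$:
\begin{align*}
\|\mathbf M_{t,W}(0)-\mathbf M_{t,W}^*\|_F\le\|\mathbf M_{t,W}(0)-\mathbf M^*_{RH,[t,t+W-1]}\|_F+\|\mathbf M^*_{RH,[t,t+W-1]}-\mathbf M_{t,W}^*\|_F .
\end{align*}
The second term is at most $\sqrt W D_M$, since every block lies in $\mathcal M$. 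Summed over $t$ and multiplied by $\rho^W$, it produces the $\rho^W T\sqrt W$ term, with $D_M$ absorbed into $\mathcal O(\cdot)$.

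\textbf{Step 3: initialization error via OGD regret.} The window initialization $\mathbf M_{t,W}(0)$ consists of the OGD iterates $M_s(0)$, $s=t,\dots,t+W-1$. These are the rolled-forward OGD sequence, so the rolling iterates of Algorithm \ref{POC_alg} must be identified with the OGD trajectory of Proposition \ref{proposition:ogd_init}. Summing over windows,
\begin{align*}
\sum_t\|\mathbf M_{t,W}(0)-\mathbf M^*_{RH,[t,t+W-1]}\|_F\le\sqrt T\Big(\sum_t\sum_{s=t}^{t+W-1}\|M_s(0)-M_{s,RH}^*\|_F^2\Big)^{1/2}\le\sqrt{TW}\Big(\sum_s\|M_s(0)-M_{s,RH}^*\|_F^2\Big)^{1/2},
\end{align*}
where the first inequality is Cauchy–Schwarz and the second uses that each index $s$ appears in at most $W$ windows. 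To convert the squared distances into $B_{OGD}$, we use strong convexity of $g_T^{RH}$. Each $\tilde f_s$ is $\mu$-strongly convex and the regularizer is convex, so $g_T^{RH}$ is $\mu$-strongly convex in $\mathbf M$. Because $\mathbf M_{RH}^*$ minimizes it over the convex set $\mathcal M^T$, we get
\begin{align*}
\frac{\mu}{2}\sum_s\|M_s(0)-M_{s,RH}^*\|_F^2\le g_T^{RH}(\mathbf M(0))-g_T^{RH}(\mathbf M_{RH}^*)\le B_{OGD}.
\end{align*}
This yields the bound $\sqrt{2TW B_{OGD}/\mu}$. Multiplying by $C_g\rho^W$ gives the $\rho^W\sqrt{WTB_{OGD}}$ term, with $\mu$ absorbed into $\mathcal O(\cdot)$. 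Combining Steps 1–3 completes the argument.

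\textbf{Main obstacle.} The delicate step is Step 3. The rolling initialization must be identified exactly with the OGD trajectory analyzed in Proposition \ref{proposition:ogd_init}. The concern is that iterates refined in earlier windows might be reused as warm starts, rather than the raw OGD values $M_s(0)$. In addition, the quadratic-growth inequality for $g_T^{RH}$ must hold over the constrained set. If the algorithm in fact reuses refined iterates, I would instead bound them by the OGD values plus the accumulated contraction errors, and absorb those errors into the same $\rho^W$ terms.
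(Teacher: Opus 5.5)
Your proposal is correct and follows the paper's proof essentially step for step. The shared steps are the $C_g$-Lipschitz reduction, the split into optimization error and local-to-global mismatch $\mathcal{E}_{T,W}$, the contraction from Proposition~\ref{prop3} after inserting the windowed restriction of $\mathbf{M}_{RH}^*$ with the $D_M\sqrt{W}$ bound, and finally Cauchy--Schwarz, the at-most-$W$-windows count, and quadratic growth of $g_T^{RH}$ combined with Proposition~\ref{proposition:ogd_init}. The paper also settles your Step 3 concern by treating $\mathbf{M}_{t,W}(0)=(M_t(0),\ldots,M_{t+W-1}(0))$ as the raw OGD iterates, so Proposition~\ref{proposition:ogd_init} applies directly and no accumulated-refinement correction is needed.
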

\begin{proof}
%We first bound the suboptimality of the global objective by the distance in the decision space. 
For any $\mathbf M,\mathbf N\in\mathcal M^T$, the gradient bound $\|\nabla\tilde f_t(M)\|_F\le G_f$ and the reverse triangle inequality for the Frobenius norm give
\begin{align*}
&g_T^{\mathrm{RH}}(\mathbf M) -g_T^{\mathrm{RH}}(\mathbf N) \\
\le& (G_f+2\lambda) \sum_{t=1}^T\|M_t-N_t\|_F \\
=& C_g\sum_{t=1}^T\|M_t-N_t\|_F .
\end{align*}
Taking $\mathbf N=\mathbf M_{\mathrm{RH}}^\star$ gives
\begin{align*}
    g_T^{\mathrm{RH}}(\mathbf M) -g_T^{\mathrm{RH}}(\mathbf M_{\mathrm{RH}}^\star)\le C_g\sum_{t=1}^T \|M_t-M_{t,\mathrm{RH}}^\star\|_F.
\end{align*}
% Due to the bounded gradients of the stage costs $\tilde{f}_t$ and the Lipschitz continuity of the Frobenius norm, the composite objective $g_T^{RH}$ is globally $C_g$-Lipschitz continuous on the bounded domain $\mathcal{M}^T$. Thus, we have

% $$g_T^{RH}(\mathbf{M}) - g_T^{RH}(\mathbf{M}_{RH}^*) \le C_g \sum_{t=1}^T \|M_t - M_{t,RH}^*\|_F$$

For each time step $t$, we introduce the exact minimizer of the local windowed objective denoted as $M_{t,W}^{*(1)}$, which represents the first block of $\mathbf{M}_{t,W}^*$, and then apply the triangle inequality to yield 
\begin{align}\label{dis_ineq_star}
    &\|M_t - M_{t,RH}^*\|_F \notag\\
    \le &\underbrace{\|M_t - M_{t,W}^{*(1)}\|_F}_{\text{optimization error}} + \underbrace{\|M_{t,W}^{*(1)} - M_{t,RH}^*\|_F}_{\text{local-to-global mismatch}}
\end{align}

At first, we bound the optimization error via Proposition \ref{prop3}.
By Proposition \ref{prop3}, the joint proximal-gradient iterates contract linearly toward the local minimizer over the active window, and hence, after $W$ iterations,

$$\|\mathbf{M}_{t,W}(W) - \mathbf{M}_{t,W}^*\|_F \le \rho^W \|\mathbf{M}_{t,W}(0) - \mathbf{M}_{t,W}^*\|_F,$$
where $\rho = \sqrt{1-\mu\eta}$. Then, the optimization error for the first block satisfies
\begin{align*}
    \|[M_{t,W}(W)]^{(1)} - M_{t,W}^{*(1)}\|_F \le &\|\mathbf{M}_{t,W}(W) - \mathbf{M}_{t,W}^*\|_F \\
    \le & \rho^W \|\mathbf{M}_{t,W}(0) - \mathbf{M}_{t,W}^*\|_F
\end{align*}

Using the triangle inequality again to introduce the windowed restriction of the global minimizer $\mathbf{M}_{t,W}^{RH,*}=(M_{t,RH}^*, M_{t+1,RH}^*,\cdots,M_{t+W-1,RH})\in \mathcal{M}^W$, we have
\begin{align}\label{step_1}
    &\|\mathbf{M}_{t,W}(0) - \mathbf{M}_{t,W}^*\|_F \notag\\
    \le& \|\mathbf{M}_{t,W}(0) - \mathbf{M}_{t,W}^{RH,*}\|_F + \|\mathbf{M}_{t,W}^{RH,*} - \mathbf{M}_{t,W}^*\|_F.
\end{align}
% \par Then, we bound the truncation error in \eqref{dis_ineq_star}
% % \begin{align}\label{step_2_1}
% %     \|M_{t,W}^{*(1)} - M_{t,RH}^*\|_F \le C \xi^W.
% % \end{align}
% {\color{blue}
% \begin{align}\label{step_2_1}
%     \|M_{t,W}^{*(1)} - M_{t,RH}^*\|_F \le \|\mathbf{M}_{t,W}^*-\mathbf{M}_{t,RH}^*\|_F\leq D_M\sqrt{W}.
% \end{align}
% }
For the full-window term appearing in the optimization-error bound \eqref{step_1}, we use the boundedness of the feasible set as
\begin{align}\label{step_2_2}
    \|\mathbf{M}_{t,W}^{RH,*}-\mathbf M_{t,W}^{*}\|_F
\le D_{M}\sqrt W.
\end{align}
\par Summing the optimization error over $t=1, \dots, T$, we need to bound the initial distance provided by the OGD oracle. 
The sum of the windowed distances in \eqref{step_1} can be bounded by the global sequence distance as
\begin{align*}
&\sum_{t=1}^T \|\mathbf{M}_{t,W}(0) - \mathbf{M}_{t,W}^{RH,*}\|_F\\
&\leq \sqrt{T}\left(\sum_{t=1}^T
\|\mathbf{M}_{t,W}(0)-\mathbf{M}_{t,W}^{RH,*}\|_F^2\right)^{1/2}\\
&\leq \sqrt{TW}\left(\sum_{t=1}^T
\|M_t(0)-M_{t,RH}^*\|_F^2\right)^{1/2},
\end{align*}
where the last inequality uses the fact that each block appears in at most $W$ active windows.
Since $g_T^{RH}$ is $\mu$-strongly convex over $\mathcal{M}^T$, we can bound the distance to the objective suboptimality using Proposition \ref{proposition:ogd_init} as follows
\begin{align*}
    \sum_{t=1}^T \|M_t(0) - M_{t,RH}^*\|_F^2 =& \|\mathbf{M}(0) - \mathbf{M}_{RH}^*\|_F^2 \\
    \le& \frac{2}{\mu} \left[ g_T^{RH}(\mathbf{M}(0)) - g_T^{RH}(\mathbf{M}_{RH}^*) \right] \\
    \le& \frac{2}{\mu} B_{OGD}.
\end{align*}
Therefore,
\begin{align}\label{step_3}
    \sum_{t=1}^T \|\mathbf{M}_{t,W}(0) - \mathbf{M}_{t,W}^{RH,*}\|_F
\leq \sqrt{\frac{2TW}{\mu}B_{OGD}}.
\end{align}
% \par In addition, the truncation error in \eqref{dis_ineq_star} satisfies $\|M_{t,W}^{*(1)} - M_{t,RH}^*\|_F \leq \varepsilon_W$.
% \begin{align}\label{step_2_1}
%     \|M_{t,W}^{*(1)} - M_{t,RH}^*\|_F \le C \xi^W.
% \end{align}
% {\color{blue}
% \begin{align}\label{step_2_1}
%     \|M_{t,W}^{*(1)} - M_{t,RH}^*\|_F \leq \varepsilon_W.
% \end{align}
% }
\par Substituting \eqref{step_1}, \eqref{step_2_2} and \eqref{step_3} back into the inequality \eqref{dis_ineq_star}, we obtain
\begin{align*}
    &\sum_{t=1}^T \|M_t(W) - M_{t,RH}^*\|_F \\
    \le&  \rho^W \left( \sqrt{W} \sqrt{\frac{2T}{\mu} B_{OGD}} + T \sqrt{W} D_{M} \right) + \mathcal{E}_{T, W}.
\end{align*}
Then, absorbing the constants into the big-O notation gives
\begin{align*}
    &g_T^{RH}(\mathbf{M}) - g_T^{RH}(\mathbf{M}_{RH}^*) \\
    \le& C_g\mathcal{O} \left( \rho^W \sqrt{W T B_{OGD}}+ \rho^W T\sqrt{W} + \mathcal{E}_{T, W} \right),
\end{align*}
which completes the proof.
\end{proof}
}
\par  Based on Lemma \ref{gRH_lem}, we are now ready to establish the main result of this paper, namely, a dynamic policy regret bound for the proposed POC algorithm. 
\begin{theorem}\label{main_thm}
	Under Assumptions \ref{dynamic_ass}, \ref{obj_ass}, and \ref{DAC_ass}, suppose the stepsizes are set as $\delta=\frac{1}{L}$ and $0<\eta\leq 1/L$. Then, the proposed POC algorithm achieves a dynamic policy regret bound of
	\begin{align}\label{reg_boud_1}
	&\mathbb{E}\left[\sum_{t=1}^T c_t(x_t,u_t)-\sum_{t=1}^T c_t(x_t^{\pi_t},u_t^{\pi_t})\right]\notag\\
	&\leq\mathcal{O}\!\left(T(1-\gamma)^{H+1}
	+C_g\rho^W\sqrt{WTB_{OGD}}\right.\notag\\
	&\left.\qquad\quad+C_g\rho^W T\sqrt{W}+C_g\mathcal{E}_{T, W}+\lambda P_T\right),
	\end{align}
	where $\rho=\sqrt{1-\mu\eta}$,  $P_T=\sum_{t=1}^T \|{M}_t^*-{M}_{t-1}^*\|_F$, and
	\[
	B_{OGD} = \frac{D_M^2}{2\delta} + \delta T G_f \left( \frac{G_f}{2} + \lambda \right) + \frac{D_M}{\delta} P_T^{RH}.
	\]
\end{theorem}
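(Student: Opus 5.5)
We decompose the dynamic policy regret into three pieces, each controlled by an earlier result:
\begin{align*}
&\sum_{t=1}^T c_t(x_t,u_t)-\sum_{t=1}^T c_t(x_t^{\pi_t},u_t^{\pi_t})\\
&=\Big[\sum_{t} c_t(x_t,u_t)-\sum_t f_t(M_{t-H-1:t})\Big]
+\Big[\sum_t f_t(M_{t-H-1:t})-\sum_t f_t(M^*_{t-H-1:t})\Big]\\
&\quad+\Big[\sum_t f_t(M^*_{t-H-1:t})-\sum_t c_t(x_t^{\pi_t},u_t^{\pi_t})\Big].
\end{align*}
After taking expectations, the first and third brackets are truncation errors. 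Lemma \ref{trunc_approx_err} bounds each by $2TGD^2\kappa^3(1-\gamma)^{H+1}$; we apply it once to the POC sequence and once to the comparator sequence. Together they give the $\mathcal{O}(T(1-\gamma)^{H+1})$ term.

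The middle bracket is the OCO-with-memory part. We first pass from the memory losses $f_t$ to the unary surrogates $\tilde f_t$. By the coordinate-wise Lipschitz property of Lemma \ref{f_t_lips},
\[
|f_t(M_{t-H-1:t})-\tilde f_t(M_t)|\le L_f\sum_{k=1}^{H+1}\|M_{t-k}-M_t\|_F\le L_f\sum_{k=1}^{H+1}\sum_{j=t-k+1}^{t}\|M_j-M_{j-1}\|_F .
\]
Summing over $t$, each increment $\|M_j-M_{j-1}\|_F$ appears at most $(H+1)(H+2)/2\le (H+2)^2$ times. Hence
\[
\sum_t f_t(M_{t-H-1:t})\le \sum_t\tilde f_t(M_t)+(H+2)^2L_f\sum_t\|M_t-M_{t-1}\|_F .
\]
This is exactly why $\lambda=(H+2)^2L_f$ was chosen: the right-hand side is at most $g_T^{RH}(\mathbf M)$. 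Symmetrically, for the comparator,
\[
\sum_t f_t(M^*_{t-H-1:t})\ge \sum_t\tilde f_t(M_t^*)-\lambda P_T .
\]
Combining the two, the middle bracket is at most $g_T^{RH}(\mathbf M)-\sum_t\tilde f_t(M_t^*)+\lambda P_T$. Next we add and subtract $g_T^{RH}$ at the comparator, using $g_T^{RH}(\mathbf M^*)=\sum_t\tilde f_t(M_t^*)+\lambda P_T$. Since $\mathbf M^*_{RH}$ minimizes $g_T^{RH}$, we have $g_T^{RH}(\mathbf M^*_{RH})\le g_T^{RH}(\mathbf M^*)$, and therefore
\[
g_T^{RH}(\mathbf M)-\sum_t\tilde f_t(M_t^*)\le \big[g_T^{RH}(\mathbf M)-g_T^{RH}(\mathbf M^*_{RH})\big]+\lambda P_T .
\]
Lemma \ref{gRH_lem} bounds the bracket by $C_g\mathcal{O}(\rho^W\sqrt{WTB_{OGD}}+\rho^WT\sqrt W+\mathcal E_{T,W})$. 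What remains is a multiple of $\lambda P_T$, which is absorbed into the $\mathcal{O}(\lambda P_T)$ term.

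Summing the three brackets gives \eqref{reg_boud_1}. The step needing the most care is the memory-to-unary reduction in the second bracket. There we must check that the expectation in Definition \ref{f_t_def} commutes with the Lipschitz argument: $f_t$ is already an expectation, so coordinate-wise Lipschitzness applies directly. We must also handle the boundary conventions $M_t=M_0$ for $t\le0$ and the initial stages $t\le H$, where fewer arguments appear and the count of each increment is only smaller. We must also confirm that $\tilde f_t(M_t)$ is the correct unary evaluation of the truncated loss at the executed iterate, rather than of the real state, since the real state is handled separately by Lemma \ref{trunc_approx_err}.
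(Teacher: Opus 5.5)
Your proposal is correct and follows essentially the same route as the paper's proof. It uses the same three-term decomposition, applying Lemma \ref{trunc_approx_err} to the POC and comparator sequences, the coordinate-wise Lipschitz reduction from $f_t$ to $\tilde f_t$ whose counting motivates $\lambda=(H+2)^2L_f$, and the add-and-subtract of $g_T^{RH}$ at $\mathbf{M}^*$ via optimality of $\mathbf{M}_{RH}^*$, leaving $g_T^{RH}(\mathbf{M})-g_T^{RH}(\mathbf{M}_{RH}^*)+2\lambda P_T$ for Lemma \ref{gRH_lem}; your sharper count $(H+1)(H+2)/2$ and your remarks on expectations and boundary conventions are only minor refinements.
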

\begin{proof}
	We decompose the dynamic policy regret into three terms:
	\begin{align*}
	&\mathbb{E}\left[\sum_{t=1}^T c_t(x_t,u_t)-\sum_{t=1}^T c_t(x_t^{\pi_t},u_t^{\pi_t})\right]\\
	=& \mathbb{E}\left[\sum_{t=1}^T c_t(x_t^K(M_{0:t-1}),u_t^K(M_{0:t}))\right]\\
	&-\mathbb{E}\left[\sum_{t=1}^T c_t(x_t^K(M_{0:t-1}^*),u_t^K(M_{0:t}^*))\right]\\
	=& \underbrace{\mathbb{E}\left[\sum_{t=1}^T c_t(x_t^K(M_{0:t-1}),u_t^K(M_{0:t}))-\sum_{t=1}^T f_t(M_{t-1-H:t})\right]}_{A_T}\\
	&+\underbrace{\mathbb{E}\left[\sum_{t=1}^T f_t(M_{t-1-H:t})-\sum_{t=1}^T f_t(M_{t-1-H:t}^*)\right]}_{B_T}\\
	&+\underbrace{\mathbb{E}\left[\sum_{t=1}^T f_t(M_{t-1-H:t}^*)-\sum_{t=1}^T c_t(x_t^K(M_{0:t-1}^*),u_t^K(M_{0:t}^*))\right]}_{C_T}.
	\end{align*}
	Note that both $A_T$ and $C_T$ correspond to the approximation error introduced by truncating the infinite-memory control problem to a finite memory length $H$. Using Lemma \ref{trunc_approx_err}, we obtain 
	\begin{align}\label{A_C_bound}
	A_T+C_T\leq 4T G D^2 \kappa^3(1-\gamma)^{H+1}.
	\end{align}
	It remains to bound $B_T$, which represents the regret associated with the truncated loss sequence $\{f_t\}_{t=1}^T$. 
    By the Lipschitz continuity of $f_t$ in Lemma \ref{f_t_lips}, we have
    \begin{align*}
        f_t(M_{t-1-H:t})-\tilde{f}_t(M_t)\leq &L_f \sum_{j=1}^{H+1}\|M_{t-j}-M_t\|_F\\
        \leq &L_f\sum_{j=1}^{H+1}\sum_{r=t-j+1}^{t}\|M_r-M_{r-1}\|_F.
    \end{align*}
    Summing over $t$, each switching term is counted at most $(H+2)^2$ times, yielding 
    \begin{align*}
        \sum_{t=1}^T [f_t(M_{t-1-H:t})\!-\!\tilde{f}_t(M_t)]\leq (H\!+\!2)^2 L_f \sum_{t=1}^T \|M_t\!-\!M_{t-1}\|_F.
    \end{align*}
    Using the definition of the regularized objective, we obtain 
	\begin{align}
	B_T=&\sum_{t=1}^T f_t(M_{t-1-H:t})-\sum_{t=1}^T f_t(M_{t-1-H:t}^*)\notag\\
	\leq&\sum_{t=1}^T \tilde{f}_t(M_t)-\sum_{t=1}^T \tilde{f}_t(M_t^*)+\lambda \sum_{t=1}^T \|M_{t-1}-M_t\|_F\notag\\
	&+\lambda \underbrace{\sum_{t=1}^T \|M_{t-1}^*-M_t^*\|_F}_{P_T} \label{decom_RH} \\
	=&g_T^{RH}(\mathbf{M})-g_T^{RH}(\mathbf{M}^*)\notag\\
	&+\lambda\sum_{t=1}^T \|M_{t-1}^*-M_t^*\|_F+\lambda P_T \notag\\
	=&g_T^{RH}(\mathbf{M})-g_T^{RH}(\mathbf{M}_{RH}^*)+g_T^{RH}(\mathbf{M}_{RH}^*)\notag\\
	&-g_T^{RH}(\mathbf{M}^*)+ 2\lambda P_T \notag\\
	\leq &g_T^{RH}(\mathbf{M})-g_T^{RH}(\mathbf{M}_{RH}^*)+2\lambda P_T, \label{B_T_up}
	\end{align}
	where $\lambda=(H+2)^2L_f$, and the last inequality follows from the optimality of $\mathbf{M}_{RH}^*$ as a minimizer of $g_T^{RH}$.
	\par Combining the results in  Lemma \ref{gRH_lem}, \eqref{A_C_bound}, and \eqref{B_T_up}, we have
	\begin{align}\label{c_dif_detres}
	&\mathbb{E}\left[\sum_{t=1}^T c_t(x_t,u_t)-\sum_{t=1}^T c_t(x_t^{\pi_t},u_t^{\pi_t})\right] \notag\\
	\leq& 4T G D^2 \kappa^3(1-\gamma)^{H+1}\notag\\
	&+\mathcal{O}\!\left(C_g\rho^W\sqrt{WTB_{OGD}}
	+C_g\rho^W T\sqrt{W}+C_g\mathcal{E}_{T, W}\right)\notag\\
	&+2\lambda P_T\notag\\
	=&\mathcal{O}\!\left(T(1-\gamma)^{H+1}
	+C_g\rho^W\sqrt{WTB_{OGD}}\right.\notag\\
	&\left.\qquad+C_g\rho^W T\sqrt{W}
	+C_g\mathcal{E}_{T, W}+\lambda P_T\right),
	%=&{\color{blue}\mathcal{O}((1-\mu\eta)^W\sqrt{T\tilde{P}_T}+\tilde{P}_T)}
	\end{align}
	which completes the proof.
	%where $\tilde{P}_T=\sum_{t=1}^T \|\tilde{M}_t^*-\tilde{M}_{t-1}^*\|$. %The last equality follows from Lemma \ref{P_T_lem}, which establishes the relationship between $P_T$ and $\tilde{P}_T$.
\end{proof}
\begin{remark}\label{g_rh_rem}
	{The regularized objective $g_T^{RH}$ is not introduced heuristically. Instead, it is analytically induced by the regret decomposition in \eqref{decom_RH}. Specifically, the Lipschitz continuity argument gives rise to a first-order variation term  $\|M_t-M_{t-1}\|_F$, rather than a squared counterpart. 
    Consequently, the resulting optimization objective contains a non-smooth total-variation regularizer. %Replacing this term with a smooth quadratic regularizer would no longer be consistent with the regret analysis under the considered setting. 
    In addition, the total-variation regularizer suppresses excessive policy variation and aligns naturally with the path-length measure appearing in the dynamic regret analysis.} 
\end{remark}
{
\begin{corollary}\label{reg_corol}
Under the conditions of Theorem \ref{main_thm}, Choose the memory and prediction horizons as
\[
H=\left\lceil c_H\log T\right\rceil,
\qquad
W=\left\lceil c_W\log T\right\rceil,
\]
where
\[
c_H\ge
\frac{1}{|\log(1-\gamma)|},
\qquad
c_W\ge
\frac{1}{|\log\rho|}.
\]
Then the following results hold:
\begin{itemize}
    \item The dynamic policy regret satisfies
    \[
    \mathbb{E}\left[\sum_{t=1}^T c_t(x_t, u_t) \!-\! \sum_{t=1}^T c_t(x_t^{\pi_t}, u_t^{\pi_t})\right]
    \!\le\!
    \widetilde{\mathcal O}(
    1\!+\!P_T\!+\mathcal{E}_{T, W}).
    \]

    \item {If the first-block finite-window error satisfies the uniform \textit{Exponential Decay of Sensitivity} (EDS) bound
    \[
    \|M_{t,W}^{*(1)} - M_{t,RH}^*\|_F \le C_{\mathrm{EDS}}\xi^W,
    \]
    for constants $C_{\mathrm{EDS}}>0$ and
    $\xi\in(0,1)$, and if $c_W$ further satisfies
    \[
    c_W\ge\frac{1}{|\log\xi|},
    \]
    then
    \[
    \mathbb{E}\left[\sum_{t=1}^T c_t(x_t, u_t) - \sum_{t=1}^T c_t(x_t^{\pi_t}, u_t^{\pi_t})\right]
    \le
    \widetilde{\mathcal O}(1+P_T),
    \]}
\end{itemize}
% $$\mathbb{E}\left[\sum_{t=1}^T c_t(x_t, u_t) - \sum_{t=1}^T c_t(x_t^{\pi_t}, u_t^{\pi_t})\right] \le \tilde{\mathcal{O}}(1+P_T),$$
where $\tilde{\mathcal{O}}(\cdot)$ hides logarithmic factors of $T$, and $P_T = \sum_{t=1}^T \|M_{t}^*- M_{t-1}^*\|_F$ is the path-length of the dynamic comparator sequence.
\end{corollary}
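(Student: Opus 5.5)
The corollary follows by plugging the prescribed horizons into the bound \eqref{reg_boud_1} of Theorem \ref{main_thm} and checking each of its five terms. First I will record the sizes of the constants as functions of $H$ and hence of $\log T$. With $H=\lceil c_H\log T\rceil$, Lemma \ref{f_t_lips} gives $L_f=\mathcal O(\sqrt H)$ and $G_f=\mathcal O(H)$. For $T$ large enough that $\kappa^2(1-\gamma)^{H+1}\le 1/2$, the constant $D$ in \eqref{D_def_eq} is $\mathcal O(H)$. The penalty is $\lambda=(H+2)^2L_f=\mathcal O(H^{5/2})$, so $C_g=G_f+2\lambda=\mathcal O(\mathrm{polylog}\,T)$. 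The diameter satisfies $D_M\le 2(\sum_i \kappa^6\kappa_B^2(1-\gamma)^{2i}d)^{1/2}=\mathcal O(1)$, since the radii in \eqref{M_set_def} decay geometrically and $\|\cdot\|_F\le\sqrt d\|\cdot\|$. By Lemma \ref{lem:surrogate-smoothness}, $L$ and hence $\delta=1/L$ do not depend on $H$. The same holds for $\mu$ and $\rho$.

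Next I bound the truncation term. Since $c_H\ge 1/|\log(1-\gamma)|$, we get $(1-\gamma)^{H+1}\le (1-\gamma)^{c_H\log T}=T^{-c_H|\log(1-\gamma)|}\le T^{-1}$. The hidden constant is $4GD^2\kappa^3$, so $T(1-\gamma)^{H+1}$ times this constant is $\mathcal O(D^2)=\widetilde{\mathcal O}(1)$. The same computation with $c_W\ge 1/|\log\rho|$ gives $\rho^W\le T^{-1}$. Hence $C_g\rho^W T\sqrt W=\mathcal O(C_g\sqrt{\log T})=\widetilde{\mathcal O}(1)$. For the OGD term, $B_{OGD}=\mathcal O(1+T\,\mathrm{polylog}\,T+P_T^{RH})$. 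Using $P_T^{RH}\le TD_M$, this gives $B_{OGD}=\widetilde{\mathcal O}(T)$. Therefore $C_g\rho^W\sqrt{WTB_{OGD}}\le C_g T^{-1}\sqrt{W}\,\widetilde{\mathcal O}(T)=\widetilde{\mathcal O}(1)$.

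Finally, $\lambda P_T=\widetilde{\mathcal O}(P_T)$ and $C_g\mathcal E_{T,W}=\widetilde{\mathcal O}(\mathcal E_{T,W})$. Summing all terms proves the first bullet. For the second bullet, the EDS assumption gives $\mathcal E_{T,W}\le TC_{\mathrm{EDS}}\xi^W$. Since $c_W\ge 1/|\log\xi|$, we have $\xi^W\le T^{-1}$, so $C_g\mathcal E_{T,W}=\widetilde{\mathcal O}(1)$ and the bound reduces to $\widetilde{\mathcal O}(1+P_T)$.

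The step that needs the most care is the bookkeeping of the $H$-dependence of the constants. One must confirm that $D$ stays polylogarithmic, which holds because the denominator $1-\kappa^2(1-\gamma)^{H+1}$ is bounded away from zero once $H\gtrsim\gamma^{-1}\log\kappa^2$. This is true for all large $T$, and any finite set of small $T$ can be absorbed into the constant. One must also confirm that $\mu,L,\rho$ are $H$-free, so that choosing $c_W$ independently of $T$ is legitimate. Everything else is substitution.
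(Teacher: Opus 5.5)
Your proposal is correct and follows the same route as the paper. You substitute $H,W=\Theta(\log T)$ into the bound of Theorem \ref{main_thm}, use $P_T^{RH}\le TD_M$ to get $B_{OGD}=\widetilde{\mathcal O}(T)$, and remove each exponentially small term via $(1-\gamma)^{H+1},\rho^W,\xi^W\le T^{-1}$; your bookkeeping (the $D^2$ constant in the truncation term, the denominator of $D$ staying away from zero for large $T$, and the $H$-independence of $L,\mu,\rho,D_M$) is more careful than the paper's, but there is one harmless slip: since $L_f\le 3\sqrt{H}GD\bar w\kappa_B\kappa^3$ with $D=\mathcal O(H)$, you actually have $L_f=\mathcal O(H^{3/2})$ and $\lambda=\mathcal O(H^{7/2})$ rather than $\mathcal O(\sqrt H)$ and $\mathcal O(H^{5/2})$, which are still polylogarithmic in $T$.
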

\begin{proof}
According to Theorem \ref{main_thm}, the dynamic policy regret is bounded by four main components: the memory truncation error, the optimization error, the local-to-global mismatch error, and the comparator switching cost. We analyze them term by term.

First, under the choice $H=\lceil c_H\log T\rceil$, the $H$-dependent quantities $D$, $G_f$, $L_f$, and $\lambda=(H+2)^2L_f$ grow at most polynomially in $H$. Consequently,
$$C_g=G_f+2\lambda=\operatorname{polylog}(T),$$
and these factors can be absorbed into the $\widetilde{\mathcal O}(\cdot)$ notation. By substituting $H = c_H \log T$, the memory truncation error introduced in the DAC parameterization becomes
\begin{align*}
    \tilde{\mathcal{O}}\left( T(1-\gamma)^{H+1} \right) = &\tilde{\mathcal{O}}\left( T \cdot T^{-c_H |\log(1-\gamma)|} \right) \\
    = &\tilde{\mathcal{O}}\left( T^{1 - c_H |\log(1-\gamma)|} \right).
\end{align*}
By choosing $c_H \ge \frac{1}{|\log(1-\gamma)|}$, the exponent becomes non-positive and this term is $\tilde{\mathcal{O}}(1)$.

\par Then, we bound the optimization error $\tilde{\mathcal{O}}(\rho^{W}\sqrt{WT B_{OGD}})$. Notice that $B_{OGD}$ depends on $P_{T}^{RH}$, which is the path-length of the regularized optimal sequence $M_{RH}^{*}$. Because the decision set $\mathcal{M}$ has a bounded diameter $D_{M}$, the total variation of any sequence within the set is trivially bounded by its absolute worst-case limit, i.e., $P_{T}^{RH} \le T \cdot D_{M} = \mathcal{O}(T)$.
Since the regularization weight is $\lambda = \text{polylog}(T)$ by using $H = \mathcal{O}(\log T)$, substituting this into the initialization bound yields $B_{OGD} = \tilde{\mathcal{O}}(T)$.
Consequently, the overall optimization error simplifies to $ \tilde{\mathcal{O}}\left(\rho^W T \sqrt{W} \right)$.
By choosing the prediction window $W = c_W \log T$, we have $\rho^W = T^{-c_W |\log \rho|}$, and the error term becomes $\tilde{\mathcal{O}}(T^{1 - c_W |\log \rho|})$. By ensuring the constant $c_W \ge \frac{1}{|\log \rho|}$, the exponential decay factor dominates the polynomial growth, making the optimization error $\tilde{\mathcal{O}}(1)$.

% {\color{red}############################
% Third, we bound the optimization error $\mathcal{O}\left( \rho^W \sqrt{T \cdot B_{OGD}} \right)$. From the revised Proposition 4, we have $B_{OGD} = \mathcal{O}(T + P_T)$. Applying the elementary inequality $\sqrt{a+b} \le \sqrt{a} + \sqrt{b}$, the optimization error is bounded by:

% $$\mathcal{O}\left( \rho^W T + \rho^W \sqrt{T P_T} \right).$$

% For the first sub-term, $\rho^W T = T^{1 - c_W |\log \rho|}$. Choosing $c_W \ge \frac{1}{|\log \rho|}$ guarantees that this term is $\mathcal{O}(1)$. For the second sub-term, we apply the AM-GM inequality:

% $$\rho^W \sqrt{T P_T} \le \frac{1}{2} \left( \rho^{2W} T + P_T \right) = \frac{1}{2} \left( T^{1 - 2c_W |\log \rho|} + P_T \right).$$

% Since $c_W \ge \frac{1}{|\log \rho|}$, the first part is $o(1)$, making the entire term bounded by $\mathcal{O}(1 + P_T)$.}

Finally, the penalty parameter for the switching cost is $\lambda = \text{polylog}(T)$. Then, the switching cost of the comparator sequence becomes

$$\mathcal{O}(\lambda P_T) = \tilde{\mathcal{O}}(P_T).$$

Combining these bounded components, the dynamic policy regret is bounded by $\tilde{\mathcal{O}}(1+P_T+\mathcal{E}_{T, W})$.
% $$\mathcal{O}(1) + \mathcal{O}(1) + \tilde{\mathcal{O}}(1 + P_T) + \tilde{\mathcal{O}}(P_T) = \tilde{\mathcal{O}}(1 + P_T).$$
\par If the additional uniform EDS condition holds, then
\[
\mathcal{E}_{T, W}\le C_{\mathrm{EDS}}T\xi^W \le C_{\mathrm{EDS}} T^{1-c_W|\log\xi|}.
\]
Hence, $c_W\ge1/|\log\xi|$ implies
$\mathcal{E}_{T, W}=\mathcal O(1)$,
which gives
\[
\mathbb{E}\left[\sum_{t=1}^T c_t(x_t, u_t) - \sum_{t=1}^T c_t(x_t^{\pi_t}, u_t^{\pi_t})\right]\le \widetilde{\mathcal O}(1+P_T).
\]
This establishes that under a logarithmic prediction window and memory length, the POC algorithm asymptotically tracks the non-stationary comparator sequence up to its path-length.
\end{proof}
}
% \begin{cor}
% 	Under the conditions of Theorem \ref{main_thm}, choose $H=c_H\log T$ with $c_H>\frac{2}{-\log(1-\gamma)}$ and choose $W=c_W\log T$ with $c_W$ sufficiently large such that the exponentially decaying terms in \eqref{reg_boud_1} are lower order. Then the dynamic policy regret of POC satisfies
% 	\[
% 	\sum_{t=1}^T c_t(x_t,u_t)-\sum_{t=1}^T c_t(x_t^{\pi_t},u_t^{\pi_t})
% 	\leq\widetilde{\mathcal{O}}(1+P_T).
% 	\]
% \end{cor}
% \begin{proof}
% 	By Lemma \ref{P_T_lem}, the choice $H=c_H\log T$ gives $\tilde{P}_T=P_T+\mathcal{O}(1)$ and makes the truncation term $T(1-\gamma)^{H+1}$ lower order. Since $\rho\in(0,1)$ and $\xi\in(0,1)$, taking $W=c_W\log T$ with $c_W$ sufficiently large ensures that $\rho^W T\sqrt{W}$ and $T\xi^W$ are lower-order terms. The remaining term $\rho^W\sqrt{WTB_{OGD}}$ is also lower order under the same logarithmic-horizon choice, using the definition of $B_{OGD}$ and the boundedness of the decision set. Finally, $\lambda=(H+2)^2L_f=\mathcal{O}(\log^2T)$, so the term $\lambda P_T$ contributes only logarithmic factors. Therefore, \eqref{reg_boud_1} yields $\widetilde{\mathcal{O}}(1+P_T)$.
% \end{proof}
% \begin{remark}
% The first result in Corollary~\ref{reg_corol} does not require an exponential decay-of-sensitivity property and explicitly retains the approximation error $\varepsilon_W$ induced by the finite prediction window.
% The second result is a conditional specialization, which applies when a uniform EDS bound holds for the TV-regularized window problems.
% \end{remark}
\begin{remark}\label{final_bound_rem}
The derived $\tilde{\mathcal{O}}(1+P_T)$ bound in Corollary \ref{reg_corol} achieves a linear dependence on the comparator path length $P_T$ under the EDS condition. The additional logarithmic factors, arising from the switching regularizer and finite-horizon refinement terms, are inherent to our non-smooth regularization and windowed predictive approach. By selecting logarithmic horizons $H, W = \mathcal{O}(\log T)$, the error terms associated with memory truncation, optimization residuals, and sensitivity decay remain uniformly bounded by $\tilde{\mathcal{O}}(1)$. Consequently, the proposed algorithm achieves the established dynamic policy regret bound using only logarithmic prediction and memory horizons under the EDS condition.
	% The result above shows that the proposed POC algorithm attains a dynamic policy regret bound that is near-optimal up to logarithmic factors. In particular, the ideal benchmark rate for strongly convex dynamic regret is of order $\mathcal{O}(1+P_T)$. Our bound matches this dependence after choosing logarithmic memory and prediction horizons, namely $H=\mathcal{O}(\log T)$ and $W=\mathcal{O}(\log T)$, while the additional logarithmic factors arise from the switching regularization parameter and the finite-horizon refinement terms. The terms $T(1-\gamma)^{H+1}$, $\rho^W T\sqrt{W}\log T$, and $T\xi^W$ quantify, respectively, the truncation error of the DAC reduction, the overall residual optimization error of the proximal-gradient refinement, and the finite-horizon sensitivity error. These terms become lower order under the logarithmic choices of $H$ and $W$. Therefore, the theorem can be interpreted as establishing a logarithmically near-optimal dynamic policy regret guarantee under the EDS condition, rather than an exact $\mathcal{O}(P_T)$ bound without additional structural assumptions.
\end{remark}
{
\begin{remark}
   The proposed POC algorithm reduces the per-step optimization burden relative to solving a full receding-horizon problem to convergence. 
   Standard MPC typically requires solving a multi-stage optimization problem at each time step, often involving multiple interior-point or Newton-type iterations. For dense and unstructured formulations with fixed per-stage dimensions, the solution of the associated KKT system can scale as $\mathcal{O}(W^3)$ with the prediction horizon $W$, while the total computation also depends on the number of iterations and the prescribed accuracy.
   In contrast, POC method does not solve the full multi-stage objective to convergence at each time step, but instead performs $W$ joint proximal-gradient refinements. Each refinement exploits the separable structure of the smooth surrogate losses together with the chain structure of the total-variation regularizer and the block-wise structure of the DAC constraints. When the prediction horizon is chosen as $W=\mathcal{O}(\log T)$ in Corollary \ref{reg_corol}, POC requires only $\mathcal{O}(\log T)$ outer refinement iterations per step. This provides a computationally structured alternative to repeatedly solving a full receding-horizon problem to high accuracy. %Our algorithm requires merely $\mathcal{O}(\log T)$ refinement updates per time step, offering a reduction in computational burden.
\end{remark}

}

\section{Simulation Results}\label{sim_sec}
This section evaluates the proposed POC algorithm on a motorized LiDAR
sensing (MLiS) system using sequences from the MCD dataset. The MLiS system consists of a LiDAR sensor mounted on a motorized platform, where the motor controls the rotation of the LiDAR to capture panoramic depth information. The control objective is to optimize the motor's rotation speed to enhance the quality of LiDAR sensing while maintaining accurate localization and efficient scanning.  %captures the essential characteristics of real-world MLiS systems while allowing for controlled experimentation and analysis. The MLiS system consists of a LiDAR sensor mounted on a motorized platform, where the motor controls the rotation of the LiDAR to capture panoramic depth information. The control objective is to optimize the motor's rotation speed to enhance the quality of LiDAR sensing while maintaining accurate localization and efficient scanning.
\par Most existing MLiS systems employ fixed motor control strategies, which are often unable to adapt well to dynamic environments and result in suboptimal sensing performance. The proposed POC algorithm dynamically adjusts the motor speed using both sensing-quality predictions and closed-loop motor feedback. Specifically, the LiDAR odometry and local mapping modules are used to construct the environment-dependent observability measure and its short-term predictions. These predictions define the lookahead cost functions used by POC to update the DAC parameter $M_t$. The tracking-error state and the recovered disturbance history are then mapped into the motor-speed command through the DAC control law. The resulting closed-loop workflow is illustrated in Fig. \ref{workflow_fig}.
% The proposed POC algorithm is designed to dynamically adjust motor control based on real-time observed LiDAR data and odometry estimates to enhance both localization accuracy and scanning efficiency. Specifically, the LiDAR data stream from the motorized LiDAR system is processed by a LiDAR odometry module, whose estimates are then used to compute the control input for the motor \cite{Li2026AEOS}. This input subsequently adjusts the LiDAR’s viewing direction to optimize sensing quality. The overall workflow of the MLiS control model is illustrated in Fig. \ref{workflow_fig}.
\par {Let $\theta_t$ denote the unwrapped measured rotor angle and $\omega_{\mathrm{pre}}$ denote the nominal angular velocity. The corresponding reference angle evolves according to 
\[
\theta_{t+1}^{\mathrm{ref}}
=
\theta_t^{\mathrm{ref}}
+\omega_{\mathrm{pre}}\Delta t.
\]
Here, $\omega_t$ denotes the motor-speed command applied during the sampling interval $[t,t+1)$. We introduce the tracking-error state and incremental control input as
\[
x_t:=\theta_t-\theta_t^{\mathrm{ref}},
\qquad
u_t:=\omega_t-\omega_{\mathrm{pre}}.
\]
The simulated motor dynamics are
\[
\theta_{t+1}
=
\theta_t+\omega_t\Delta t+w_t,
\]
where $w_t$ represents the aggregate motor-tracking error, encoder uncertainty, and unmodeled discretization effects. Consequently, the tracking-error dynamics become
\[
x_{t+1}=x_t+\Delta t\,u_t+w_t.
\]
Therefore, the motorized LiDAR model is a scalar instance of the linear system \eqref{linear_dyn_sys} with $A=1$ and $B=\Delta t$.
\par At the beginning of time step $t$, the most recent disturbance is recovered from the measured state transition as
\begin{align}
    w_{t-1}
    &=
    x_t-Ax_{t-1}-Bu_{t-1}                                      \notag\\
    &=
    \theta_t-\theta_{t-1}
    -\Delta t\,\omega_{t-1} .
    \label{eq:lidar_disturbance_recovery}
\end{align}
Therefore, only previously observed quantities are used in the control law. We set $w_s=0$ for $s<0$.
\par The motor command is generated by the same DAC parameterization:
\begin{equation}
    u_t
    =
    -Kx_t
    +
    \sum_{i=1}^{H}
    M_t^{[i-1]} w_{t-i}.
    \label{eq:lidar_dac}
\end{equation}
Recalling that $u_t=\omega_t-\omega_{\mathrm{pre}}$, the
physical motor-speed command is therefore
\begin{equation}
\omega_t
=
\omega_{\mathrm{pre}}
-Kx_t
+\sum_{i=1}^{H}M_t^{[i-1]} w_{t-i}.
\label{eq:motor_speed_command}
\end{equation}
Thus, POC optimizes the DAC parameter $M_t$, which is mapped into the motor-speed command through \eqref{eq:motor_speed_command}, while $K$ remains fixed.
% Here, $K$ is a fixed stabilizing feedback gain, while
% \[
%     M_t
%     =
%     \bigl(
%     M_t^{[0]},\ldots,M_t^{[H-1]}
%     \bigr)
% \]
% is the time-varying DAC parameter optimized by POC. 
Since $A-BK=1-\Delta t K$, the scalar closed-loop stability condition
is
\begin{equation}
    |1-\Delta t K|<1.
    \label{eq:scalar_stability}
\end{equation}
}
% The system state at time $t$ is represented by the viewing direction $\theta_t$, which evolves according to the motor angular velocity control input $v_t$ as follows:
% \begin{align}
% \theta_{t+1}=\theta_t+v_t\Delta_t,
% \end{align}
% where $v_t$ is the motor angular velocity (rad/s) to be controlled. 
{Following the work \cite{li2025ua_mpc}, the online cost function at each time step is formulated as a weighted sum of localization uncertainty and control effort. The sensing quality depends on the viewing direction $\theta$ via an environment-dependent observability function $U(\theta)$.
The physical sensing-control cost is
\begin{align}\label{f_t_sim_rev}
\ell_t(\theta,\omega)
=
\alpha\|U_t(\theta)\|^2
+
\beta\|\omega-\omega_{\mathrm{pre}}\|^2,
\end{align}
where $\alpha, \beta>0$ are weighting parameters.
Under the tracking-error coordinates, the corresponding state--input cost is
\[
c_t(x,u)
:=
\ell_t(\theta_t^{\mathrm{ref}}+x,
       \omega_{\mathrm{pre}}+u)
=
\alpha
\|U_t(\theta_t^{\mathrm{ref}}+x)\|^2
+
\beta\|u\|^2.
\]
}
% \par Following the work \cite{li2025ua_mpc}, the online cost function at each time step is formulated as a weighted sum of odometry accuracy and control effort. We denote the motor angular velocity by $\omega_t$ in the simulation and write the stage cost as
% \begin{align}\label{f_t_sim}
% &f_t(\omega_t)=\alpha \|U(\theta_t)\|^2+\beta \|\omega_t-\omega_{pre}\|^2,
% \end{align}
% where $U(\theta)$ denotes the observability uncertainty, $v_{pre}$ is the preset reference angular velocity of the motor, and $\alpha, \beta>0$ are weighting parameters.
\par To make the online optimization tractable, we approximate the observability function by a piecewise-linear surrogate $U'(\theta)$:
\begin{align}
U'(\theta)=&\left(1-\frac{\theta}{\Delta \theta}+\left\lfloor \frac{\theta}{\Delta \theta} \right\rfloor \right) U_{\left\lfloor \frac{\theta}{\Delta \theta}\right\rfloor}\notag\\
&+\left(\frac{\theta}{\Delta \theta}-\left\lfloor \frac{\theta}{\Delta \theta}\right\rfloor\right) U_{\left\lfloor \frac{\theta}{\Delta \theta}\right\rfloor+1}, \notag
\end{align}
as illustrated in Fig. \ref{surr_func_fig}. The first term in \eqref{f_t_sim_rev} measures localization uncertainty induced by the current viewing direction, whereas the second term penalizes deviation from the nominal motor speed. {The theoretical analysis assumes globally strongly convex and smooth stage costs, whereas the practical observability objective is locally approximated from scene-dependent data. The simulation serves as an empirical evaluation of the proposed mechanism under a general sensing objective.}
\begin{figure}
	\centering
	\includegraphics[width=8cm]{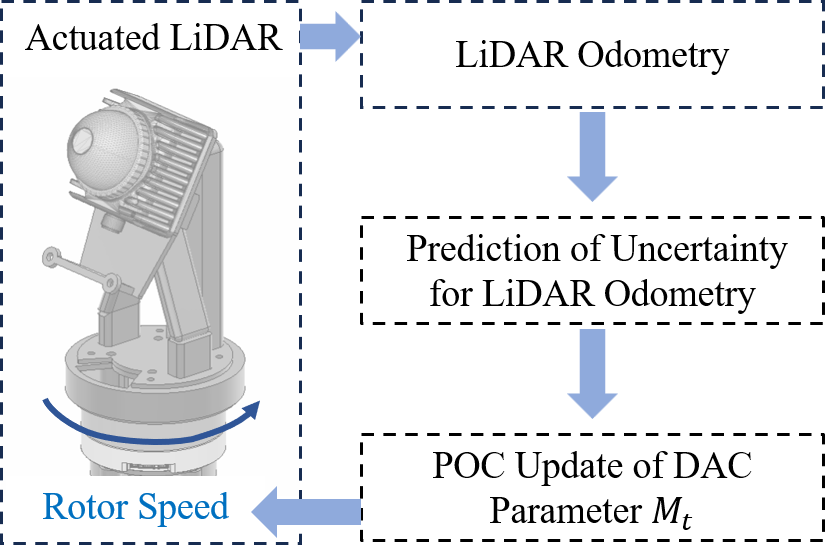}
	\caption{Closed-loop implementation of POC in the motorized LiDAR system.}
	\label{workflow_fig}
\end{figure}
\begin{figure}
	\centering
	\includegraphics[width=8cm]{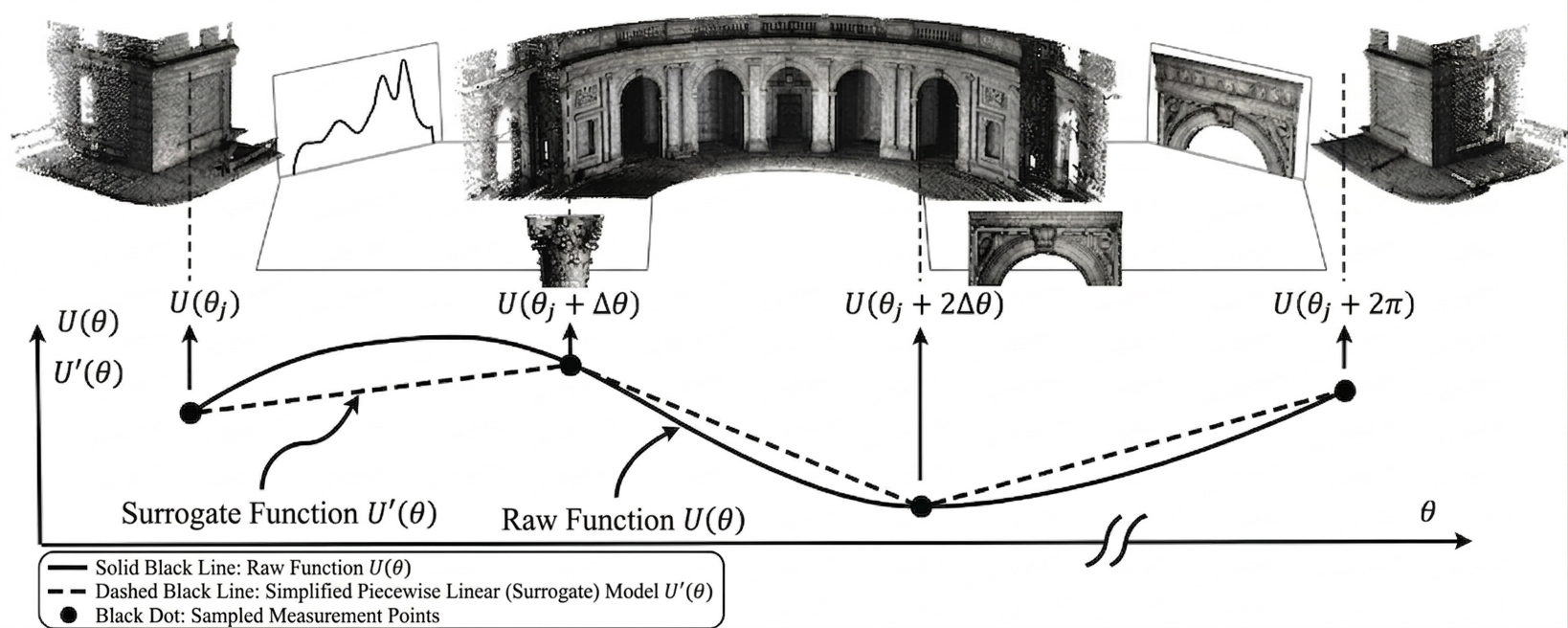}
	\caption{The piecewise linearized surrogate function $U'$.}
	\label{surr_func_fig}
\end{figure}
\par This stage cost is explicitly time-varying because the sensing quality depends on the local scene geometry encountered along the platform trajectory. Since the underlying map may be non-convex, we use a local piecewise-linear approximation $U'(\theta)$ around the current direction $\theta_t$ to construct a tractable surrogate for the POC updates.
At time $t$, the uncertainty-prediction module provides the predicted observability profiles over the next $W$ steps. These profiles define the predicted lookahead state-input costs $\{c_s\}_{s=t}^{t+W-1}$ and, through the truncated DAC model, the corresponding surrogate losses $\{\tilde f_s\}_{s=t}^{t+W-1}$. The POC algorithm then performs the windowed proximal-gradient updates to obtain $M_t$, which is converted into the motor-speed command through  \eqref{eq:motor_speed_command}.
For the simulation setup, we set the memory length to $H=10$ and the prediction horizon to $W=8$. The cost weights are configured as $\alpha=1000$ and $\beta=1$, with a baseline preset speed of $\omega_{\mathrm{pre}}=1.8$ rad/s. Evaluations are conducted across the NTU, KTH, and TUHH scenes from the MCD dataset.
\par We evaluate the proposed algorithm and baseline methods from two perspectives: (1) localization-odometry (LO) accuracy and (2) scanning efficiency. LO accuracy is quantified using the Absolute Translation Error (ATE):
$$
\mathrm{ATE}=\sqrt{\frac{1}{N} \sum_{i=1}^N\left\|\mathbf{r}_i^{\mathrm{est}}-\mathbf{r}_i^{\mathrm{gt}}\right\|^2},
$$
where $\mathbf{r}_i^{\text {est }}$ and $\mathbf{r}_i^{\mathrm{gt}}$ denote the estimated and ground truth positions, respectively, and $N$ represents the total number of pose estimates.
Scanning efficiency is measured by the average number of voxels (0.5 m each) scanned every 5 seconds, defined as completeness (CMPLT):
$$
\mathrm{CMPLT}=\frac{1}{Q} \sum_{q=1}^Q v_q,
$$
where $v_q$ is the number of scanned voxels during the $q$-th scan period, and $Q$ is the total number of scan periods. This dual-metric framework provides a comprehensive assessment of both accuracy and efficiency.

\par We compare the proposed  POC algorithm with two baseline motor control strategies commonly used in existing systems: constant-speed control (1.8 rad/s) and Uncertainty-Aware Model Predictive Control (UA-MPC) \cite{li2025ua_mpc}. The UA-MPC method formulates the control problem as a finite-horizon optimization that explicitly incorporates a sensing uncertainty metric into the objective and solves a nonlinear program at each step to obtain control actions. Consistent with \cite{li2025ua_mpc}, we set the weights to $\alpha=1000$ and $\beta=1$ to balance both objectives. The MLiS system aims to dynamically adjust the rotation speed in response to unbalanced features in the panoramic depth map, focusing on regions with rich features while maintaining a trade-off between scanning efficiency and LO accuracy. This demonstrates the adaptability and effectiveness of control methods in diverse scenarios.

\begin{table}
	\caption{Evaluation on Simulation Dataset}
	\label{eva_table}
	\centering
	\resizebox{\columnwidth}{!}{%
	\begin{tabular}{|l|l|l|l|l|l|l|}
		\hline \multirow{2}{*}{} & \multicolumn{2}{|c|}{NTU} & \multicolumn{2}{|c|}{KTH} & \multicolumn{2}{|c|}{TUHH} \\
		& ATE & CMPLT & ATE & CMPLT & ATE & CMPLT \\
		\hline Constant & 5.208 & 67574 & $\underline{6.201}$ & 37148 & 8.735 & 41013 \\
		%\hline Zero-Speed (speed $=0$ ) & 4.71 & 51977 & 15.3 & 22529 & 84.64 & 24737 \\
		\hline UA-MPC & 5.093 & 70474 & 6.411 & $\underline{61541}$ & 7.852 & $\underline{97670}$ \\
		\hline POC & $\underline{3.236}$ & \underline{74851} & 6.219 & 42999 & $\underline{6.213}$ & 42344 \\
		\hline
	\end{tabular}}
\end{table}

\begin{figure}[htbp]
	\centering
	\subfigure[NTU]{
		\includegraphics[width=8cm]{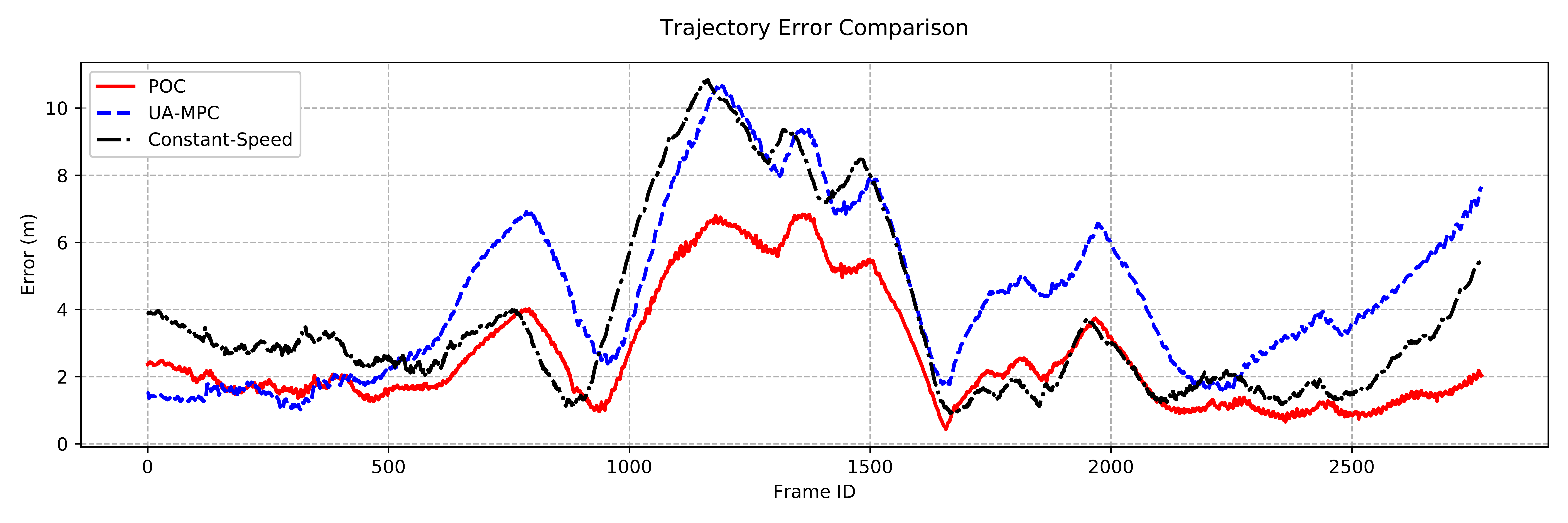}
	}
	\centering
	\subfigure[KTH]{
		\includegraphics[width=8cm]{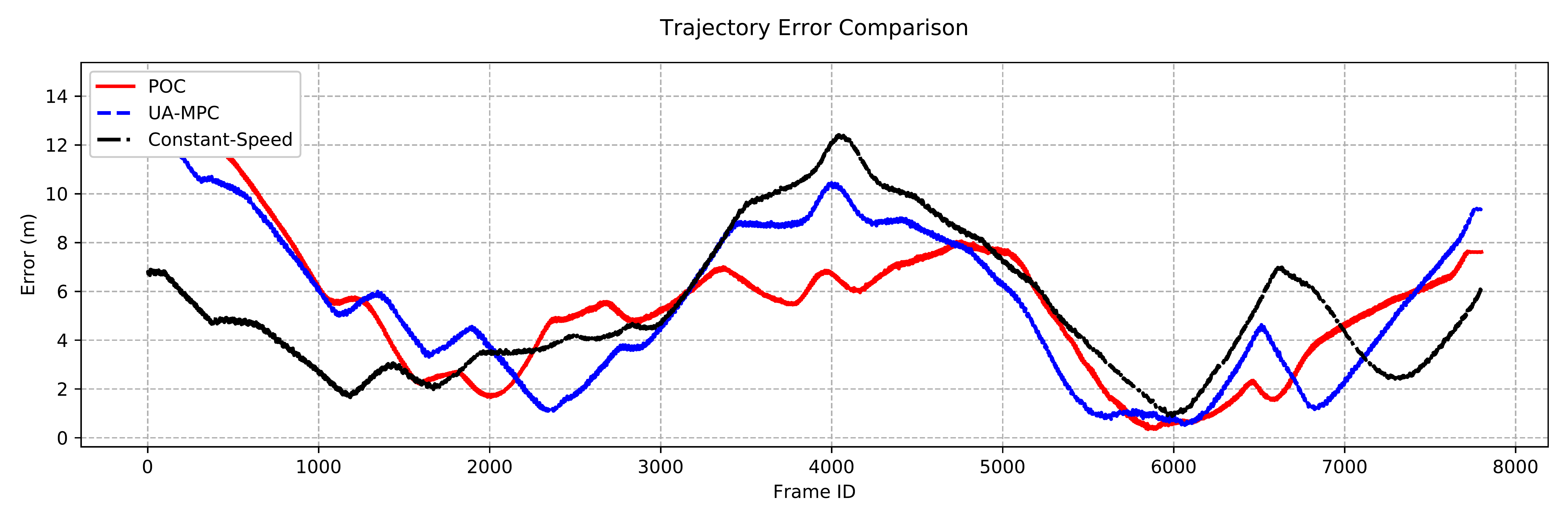}
	}
	\subfigure[TUHH]{
		\includegraphics[width=8cm]{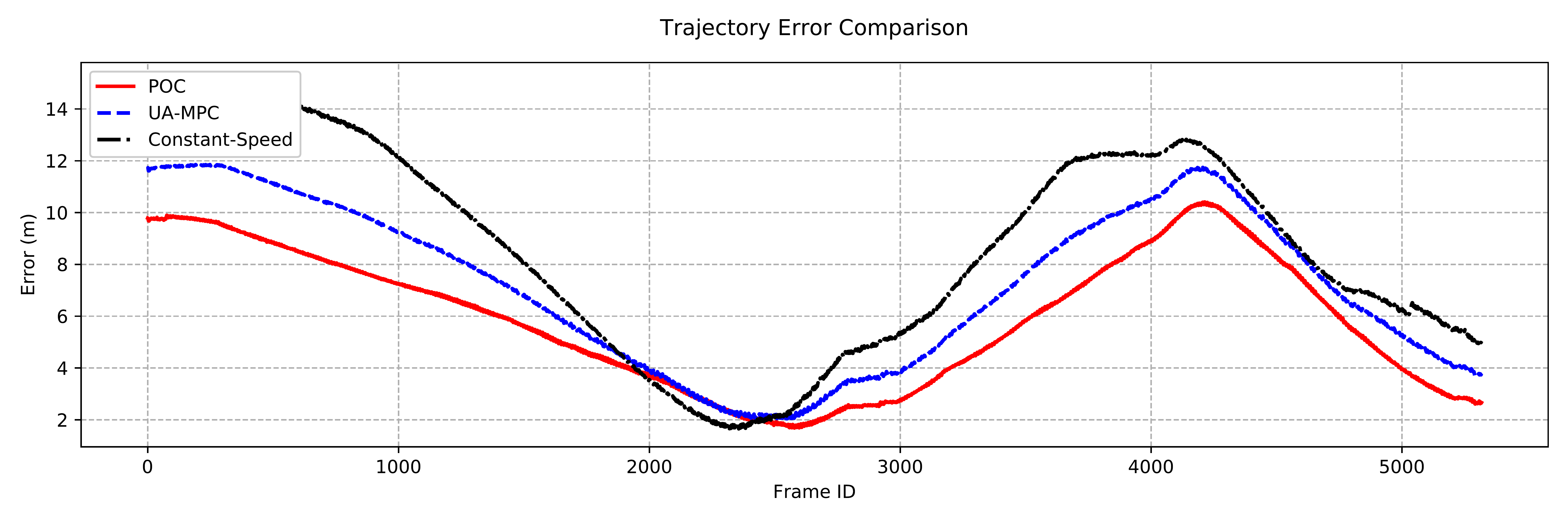}
	}
	\caption{ Trajectory error comparison across different scenes. }
	\label{err_fig}
\end{figure}

\begin{figure*}[htbp]
	\centering
	\includegraphics[width=0.9\textwidth]{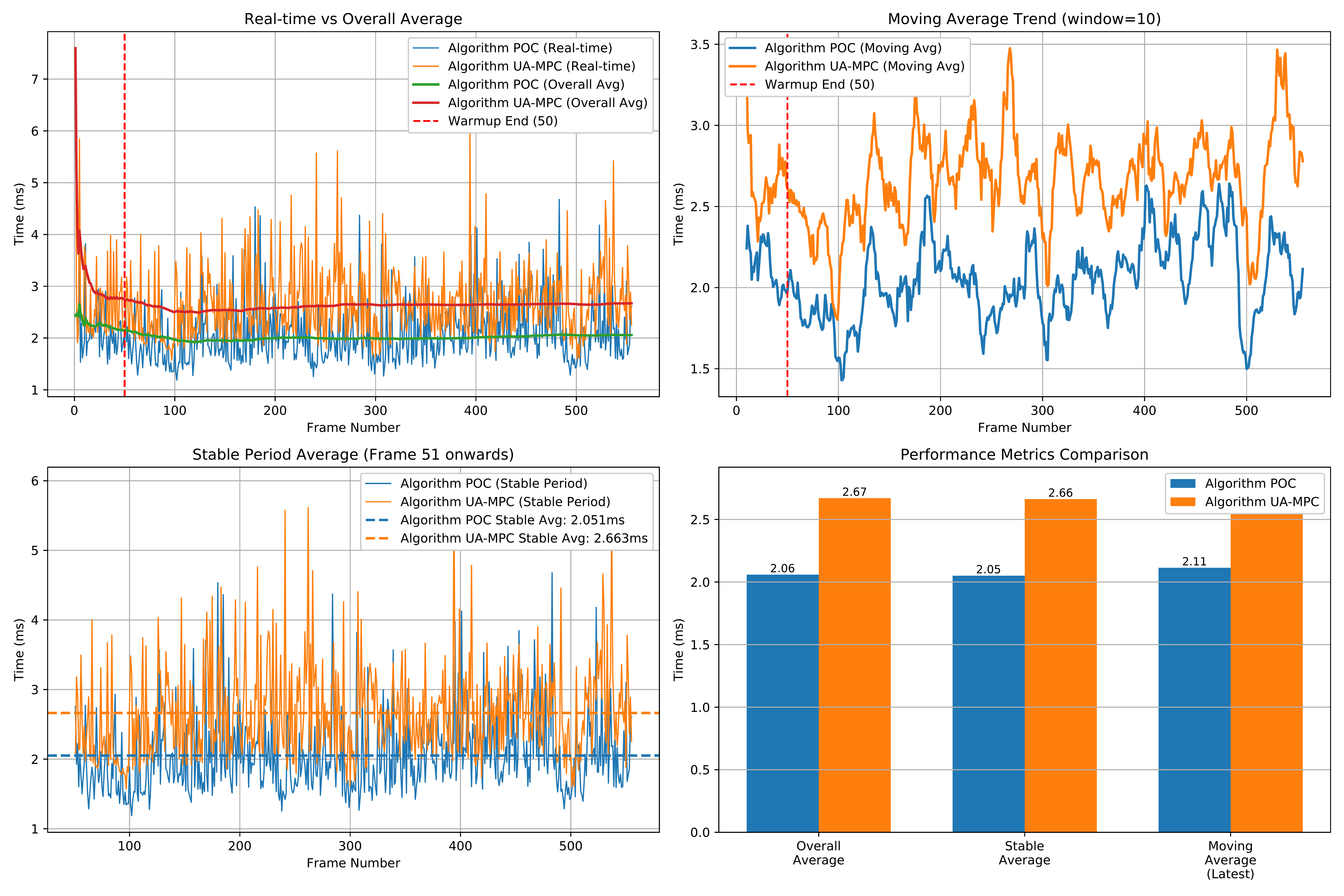}
	\caption{Runtime comparison of POC and UA-MPC on NTU dataset}
	\label{time_fig}
\end{figure*}
\par Table \ref{eva_table} presents the performance comparison among the proposed POC algorithm, UA-MPC \cite{li2025ua_mpc}, and constant-speed control across three scenarios. The results show that POC achieves the lowest ATE in the NTU and TUHH scenes. In KTH, its ATE is close to that of constant-speed control and lower than that of UA-MPC. In terms of CMPLT, POC outperforms constant-speed control in all three scenes and achieves the highest value in NTU, whereas UA-MPC achieves higher CMPLT in KTH and TUHH. These results demonstrate a scene-dependent trade-off between localization accuracy and scanning completeness. %The trajectory error ATE comparison in Fig. \ref{err_fig} further illustrates that POC consistently maintains lower trajectory errors compared to the baseline methods, particularly in NTU and TUHH scenes. 
The trajectory-error comparison in Fig. \ref{err_fig} further shows that POC achieves lower errors than UA-MPC across the three scenes and provides particularly clear improvements in NTU and TUHH, while remaining comparable to constant-speed control in KTH. {
Although POC and UA-MPC use the same system model and predictive information, POC performs incremental DAC updates with total-variation regularization rather than directly optimizing the finite-horizon uncertainty surrogate. This regularization discourages abrupt policy variations and provides a plausible explanation for the lower ATE observed with POC.
}
\par To further demonstrate the practical efficiency of the proposed POC algorithm, we compare its computation time with the UA-MPC on the NTU dataset. The results are shown in Fig. \ref{time_fig}. The steady-state average computation time of POC is significantly lower than that of UA-MPC, indicating that POC is more computationally efficient and better suited for real-time control applications. Specifically, POC achieves a significantly lower steady-state average computation time (2.051 ms vs. 2.663 ms), corresponding to a 23\% improvement, while also exhibiting reduced variability (standard deviation 0.564 ms vs. 0.680 ms). The real-time and moving-average curves further show that, after the warm-up phase, POC maintains a lower and less variable per-step computation time than UA-MPC. %Additionally, its lower worst-case latency indicates improved robustness. 
Overall, these results demonstrate that POC provides a more computationally efficient implementation for real-time motorized LiDAR control.

\section{Conclusion}\label{conclu_sec}
This paper has proposed a predictive online control (POC) algorithm for linear dynamical systems with adversarial and time-varying cost functions. 
By integrating the DAC parameterization with a windowed receding-horizon optimization framework,  the proposed approach effectively incorporates short-term predictions into online control while explicitly regulating policy variation. Under standard assumptions, we established dynamic policy regret guarantees for the proposed algorithm and explicitly characterized the coupled effects of the prediction horizon and the DAC memory truncation on the regret performance. 
Simulation results on a motorized LiDAR sensing system further demonstrated the practical effectiveness of the proposed algorithm in improving localization accuracy while achieving a favorable trade-off between localization performance and scanning completeness. Future work will focus on extending the framework to general convex cost functions and on developing a unified architecture that jointly performs prediction and online control.

\bibliographystyle{IEEEtran}
\bibliography{refer}

\end{document}